\newtheorem{rem}{Remark}[section]
\newtheorem{coll}{Corollary}[section]
\newtheorem{thm}{Theorem}[section]
\newtheorem{lem}{Lemma}[section]
\newtheorem{defin}{Definition}[section]
\newtheorem{propos}{Proposition}[section]
\newcommand{\bP}{\mathbb{P}}
\newcommand{\ve}{\varepsilon}
\begin{document}
\section*{\centerline{Stability of measure solutions to a generalized Boltzmann} \linebreak
\centerline{equation with collisions of a random number of particles $^{\diamond}$}}
\vskip5mm
\centerline{Henryk Gacki$^{a}$ and Łukasz Stettner$^{b, \star}$}
\vskip4mm
\centerline{$^{a}${\small{\it Faculty of Science and Technology, University of Silesia in Katowice, Bankowa 14,}}}
 \centerline{\small{\it 40-007 Katowice, Poland}}
 \vskip2mm
\centerline{$^{b}${\small{\it Institute of Mathematics Polish Academy of Sciences, Śniadeckich 8,}}}
\centerline{\small{\it 00-656 Warsaw, Poland }}


\vskip0.8cm

\section*{Abstract}
In the paper we study a measure version of the evolutionary nonlinear Boltzmann-type equation in which we admit  a random number of collisions of particles. We consider first a stationary model and use  two methods to find its fixed points: the first based on Zolotarev seminorm and the second on Kantorovich-Rubinstein  maximum principle. Then a dynamic version of Boltzmann-type equation is considered and its asymptotical stability is shown.
\\

{\it Keywords}: Boltzmann equation,  collisions of random particles,  Zolotariev seminorm,  \linebreak Kantorovich-Rubinstein maximum principle\\

{\it 2010 MSC}:  82B31,	82B21.

\section{{\bf Introduction}}\label{sec1Int}

In the paper we consider a nonlinear evolutionary measure valued Boltzmann type equation of the form
 \begin{equation}\label{s4.w1.11adodm2}
    \frac{d\psi}{dt} + \psi =\bP\,\psi \qquad\text{for}\qquad t \geq 0
\end{equation}
where the operator $\bP$  maps  $\mathcal{M}_{1}(\mathbb R_{+})$ the space of probability measures on $\mathbb{R}_{+}=[0,\infty)$ into itself. We are looking for $ \psi : \mathbb{R}_{+} \rightarrow \mathcal{M}_{sig}(\mathbb{R}_{+})$ with $\psi_0\in\mathcal{M}_1(\mathbb{R}_{+})$, where  $\mathcal{M}_{sig}(\mathbb{R}_{+})$ (or shortly $\mathcal{M}_{sig}$) is the space of all signed measures on $\mathbb{R}_{+}$.

Equation \eqref{s4.w1.11adodm2} is a generalized version of the equation considered in \cite{tjon wu} (see also section 8.9 in \cite{LM}, or \cite{bob} for the motivation),
\begin{equation}\label{8.9.2}
\frac{\partial u(t,x)}{\partial t}+
u(t,x)=\int\limits_{x}\limits^{\infty}\frac{dy}{y}\int\limits_{0}\limits^{y}u(t,
y-z)u(t,z)dz:=P\,u(x) \qquad t\geq 0,\qquad x\geq 0,
\end{equation}
which describes energy changes subject to the collision operator $ P\,u(x)$ and which was obtained

------------------------------------------------------------------------------------------------------------------------

{} $^{\diamond}$  {\small{Research supported by National Science Center, Poland by grant UMO- $2016/23/B/ST1/00479$}}

{} $^{\star}$ {\small{Corresponding author}}

{} {\small{{\it Email addresses}: henryk.gacki@us.edu.pl (Henryk Gacki), stettner@impan.pl (Łukasz Stettner)}}

\noindent
from Boltzmann equation corresponding to a spatially homogeneous gas with no external forces using  Abel transformation. To be more precise, in the theory of dilute gases Boltzmann equation in the general form
${ dF(t,x,v)\over  dt}=C(F(t,x,v))$ gives us an information about time, position and velocity of particles of the dilute gas. This equation is a base for many mathematical models of colliding particles. In particular, for a spatially homogeneous gas we come to the equation \eqref{8.9.2} with additional conditions saying that its solution $u$, for fixed $t$,  is a density with first moment equal to $1$, which in turn corresponds to the conservation law of mass and energy.
The operator $Pu$ is a density function of the random variable $\eta (\xi_{1} +  \xi_{2})$ where random variables $\eta$, $\xi_{1}$ and $\xi_2$ are independent and $\eta$ is uniformly distributed, while $\xi_1$, $\xi_2$ have the same density function $u$.
The assumption that $\eta$ has uniform distribution on $[0,1]$  is quite restrictive
and there are no physical reasons to assume that the distribution of energy of particles can be described only by its density (is absolutely continuous).
Moreover collision of two particles maybe replaced by collision of a random number of particles. This is a reason that in what follows we shall consider a measure valued version
 \eqref{s4.w1.11adodm2} of the equation \eqref{8.9.2}.

Let
\begin{equation}\label{s4.w1.20adodm}
D := \big\{\mu\in \mathcal{M}_{1}(\mathbb R_{+}) : m_{1}(\mu) = 1  \big\},\quad\text{with}\quad m_{1}(\mu) = \int\limits_{0}\limits^{\infty}\,x\mu(dx),
\end{equation}
and denote by $\bar{D}$ a weak closure of $D$, which is of the form
 \begin{equation}\label{s4.w1.20adod}
\bar{D} := \big\{\mu\in \mathcal{M}_{1}(\mathbb R_{+}) : m_{1}(\mu) \leq 1  \big\}.
\end{equation}
The operator $P$ defined in \eqref{8.9.2}  describes collision of two particles. In what follows we shall consider a general situation of collision of a random number of particles.
To describe the collision operator in this case we start from recalling  the convolution operator of order $n$ and the linear operator $P_{\varphi}$,  which is related to multiplication of random variables.

For every $n \in \mathbb{N}$ let $P_{*n}:\mathcal{M}_{sig} \rightarrow \mathcal{M}_{sig}$, be given by the formula
\begin{equation}\label{equ63}
P_{*1} \mu :=\mu, \quad P_{*(n+1)} \mu:=\mu  * P_{*n}\mu \quad\text{for}\quad \mu\in \mathcal{M}_{sig}.
\end{equation}

It is easy to verify that $P_{*n}(\mathcal{M}_{1}(\mathbb R_{+}))\subset \mathcal{M}_{1}(\mathbb R_{+})$ for every $n\in \mathbb{N}$. Moreover, $P_{*n}|_{\mathcal{M}_{1}(\mathbb R_{+})}$ has a simple probabilistic interpretation: if $\xi_1,...,\xi_n$ are independent  random variables with the same probability distribution $\mu$, then $P_{*n}\mu$ is the probability distribution of $\xi_1+...+\xi_n$. \\

The second class of operators we are going to study is related to multiplication of random variables. Formal definition is as follows: given $\mu,v \in \mathcal{M}_{sig}$, we define product $u \circ  v$ by
\begin{equation}
(\mu \circ  v)(A):= \int_{\mathbb{R}_+} \int_{\mathbb{R}_+} \mathbf{1}_{A} (xy) \mu(dx)v(dy) \quad\text{for}\quad A\in {\mathcal{B}_{\mathbb{R}_+}}.
\end{equation}
and
\begin{equation}\label{equ58}
\big \langle f,\mu  \circ  v \big \rangle =  \int_{\mathbb{R}_+} \int_{\mathbb{R}_+} f(xy) \mu(dx)v(dy)
\end{equation}
for every Borel measurable $f:\mathbb{R}_+ \rightarrow \mathbb{R}$ such that $(x,y) \mapsto f(xy)$ is integrable with respect to the product of $|\mu|$ and $|v|$. For fixed $\varphi \in \mathcal{M}_{1}$ define
\begin{equation}\label{equ62}
P_{\varphi}\mu := \phi \circ \mu \quad\text{for}\quad \mu\in \mathcal{M}_{sig}.
\end{equation}

 Similarly as in the case of convolution it follows that $P_\varphi (\mathcal{M}_1) \subset \mathcal{M}_1$. For $\mu\in \mathcal{M}_1$ the measure $ P_\varphi \mu$ has an immediate probabilistic interpretation: if $\varphi$ and $\mu$ are probability distributions of random variables $\xi$ and $\eta$ respectively, then $P_{\varphi}\mu$ is the probability distribution of the product  $\xi \eta$.

We introduce now definition of more general version of $P$ allowing infinite number of collisions:
\begin{equation}\label{equ61}
\bP:=\sum_{i=1}^\infty \alpha_i P_{\varphi_i} P_{*_i}=\sum_{i=1}^\infty \alpha_i \bP_i,
\end{equation}
where we have  $\bP_i:=P_{\varphi_i} P_{*_i}$, $\sum\limits_{i=1}\limits^\infty \alpha_i=1$, $\alpha_i\geq 0$,  $\varphi_i \in \mathcal{M}_1 $ and $m_1 (\varphi_i)=1/i$ \linebreak and the limit of the series is considered in the weak topology sense that is, $\sum\limits_{i=1}\limits^n \alpha_i P_{\varphi_i} P_{*_i}\Rightarrow \sum\limits_{i=1}\limits^\infty \alpha_i P_{\varphi_i} P_{*_i}$, which means
  $\sum\limits_{i=1}\limits^n \alpha_i P_{\varphi_i} P_{*_i}(f)\to \sum\limits_{i=1}\limits^\infty \alpha_i \bP_i(f)$ as $n\to \infty$, for any continuous bounded function $f$ defined on $\mathbb{R}$.  From (\ref{equ61}) it follows that $\bP \mathcal{M}_1 \subset \mathcal{M}_1 $. Using (\ref{equ63}) and (\ref{equ62}) it is easy to verify that for $\mu \in D$,
\begin{equation}
m_1 (P_{*i} \mu)=i \quad\text{and}\quad m_1 (P_{\varphi_i} \mu) =1/i.
\end{equation}

Given $\mu \in \mathcal{M}_1$ the value of $\bP\mu$ can be considered as the probability  distribution  of a random variable $\zeta$ defined as
\begin{equation}\label{s4.w1.6adodm}
\zeta:=\eta_\tau (\sum_{j=1}^\tau \xi_{\tau j}),
\end{equation}
where we have  sequences of
independent random variables $\eta_i$, $\xi_{ij}$, $j=1,2,\ldots,$   and $\tau$, such
that $\xi_{ij}$ have the same probability distribution $\mu$ for $j=1,2,\ldots,$, random variables
$\eta_i$ have the probability distribution  $\varphi_i$ and random variable $\tau$ takes values in the set $\left\{1,2,\ldots\right\}$ such that $P\left\{\tau=j\right\}=\alpha_j$.
 Physically this means that the number of colliding particles is random and energies of particles before a
collision are of independent quantities and that a particle after
collision of $i$-th particles takes the $\eta_i$ part of the sum of the energies of the
colliding particles.

We can also define $\tilde{\eta}_i:=i \eta_i$ and consider

\begin{equation}\label{defzeta}
\zeta:=\tilde{\eta}_\tau ({1\over \tau}\sum_{j=1}^\tau \xi_{ \tau j})
\end{equation}
and write
\begin{equation}\label{impf}
\bP=\sum_{i=1}^\infty \alpha_i \tilde{P}_{\varphi_i} \tilde{P}_{*_i}
\end{equation}
where $\tilde{P}_{\varphi_i}={P}_{\tilde{\varphi}_i}$ with $\tilde{\varphi}_i$ being the probability distribution of $i \eta_i$, and $ \tilde{P}_{*_i}\mu$ is the probability distribution of ${\xi_{i1}+\ldots + \xi_{ii} \over i}$.

Measure valued solutions to the Boltzmann-type equations with different collision operator and even in multidimensional case were studied in a number of papers see e.g. \cite{lm}, \cite{abcl} and \cite{Mor}. In the paper \cite{lm} existence and stability of measure solutions to the spatially homogeneous Boltzmann equations that have polynomial and exponential moment production properties is shown. In the paper \cite{abcl} existence and uniqueness of measure solutions to one dimensional Boltzmann dissipative equation and then their asymtotics is considered. In this paper first stationary (steady state) equation for a specific collision operator is studied and then dynamic fixed point theorem is used.
Asymptotics of solutions to the Boltzmann equation with infinite energy to so called self similar solutions was studied in \cite{bc}.
Asymptotic property of self similar solutions to the Boltzmann Equation for Maxwell molecules was then shown in \cite{bct}.
Long time behaviour of the solutions to the nonlinear Boltzmann equation for spacially uniform freely cooling inelastic Maxwell molecules was studied in \cite{bict}. Stability of Boltzmann equation with external potential was also considered in \cite{W} and in the case of exterior problem in  \cite{Y}.
Solutions of the Boltzmann equation with collision of $N$ particles and their limit behaviour when $n\to \infty$ over finite time horizon were studied in \cite{ap}. In \cite{bcg}, the N-particle model, which includes multi-particle interactions was considered. It is shown that under certain
natural assumptions we obtain a class of equations which can be considered as the most general Maxwell-type model.

In \cite{Rudzwol} an individual based model  describing phenotypic evolution
in hermaphroditic populations which includes random and assortative mating of individuals is introduced. By increasing the number of individuals to infinity a nonlinear transport equation is obtained, which describes the evolution of phenotypic probability distribution. The main result of the paper is a theorem on asymptotic stability of trait (which concerns the model with more general operator $P$) with respect to Fortet-Mourier metric.

Stability problems of the Boltzmann-type equation \eqref{s4.w1.11adodm2} with operator $P$ corresponding to collision of two particles was studied in the paper \cite{1 lasota traple}. The case with infinite number of particles was considered in \cite{20 lasota traple} using Zolotariev seminorm approach. Properties of stationary solutions corresponding two collision of two particles were studied in \cite{lastr}.

In this paper we study stability of solutions to one dimensional Boltzmann-type equation \eqref{s4.w1.11adodm2} with operator $P$ of the form $\bP$ defined in \eqref{equ61}.
We show that if this equation has a stationary solution $\mu^*$, such that its support covers $\mathbb{R}_+$, then taking into account positivity of solutions to \eqref{s4.w1.11adodm2}-\eqref{equ61} we have its asymptotical stability in Kantorovich - Wasserstein metric to $\mu^*$.
We consider first stationary equation and look for fixed points of the operator $\bP$. For this purpose we adopt two methods to show the existence of fixed point of $\bP$ with the first moment equal to $1$. The first method is based on Zolotarev seminorm and in some sense simplifies the method used in \cite{20 lasota traple}. The second method is based on Kantorovich-Rubinstein maximum principle and generalizes the results of \cite{7} and then also of \cite{1 lasota traple} to the case of a random number of colliding particles. We also show several characteristics of fixed points of $\bP$. Results on the stationary equation are then used to study stability of the dynamic Boltzmann equation. The novelty of the paper is that we consider the Boltzmann-type equation \eqref{s4.w1.11adodm2} with random unbounded number of colliding particles and show its stability in Kantorovich - Wasserstein metric using probabilistic methods, generalizing former results of \cite{1 lasota traple}, \cite{20 lasota traple} and \cite{7}.  To improve readability of the paper an appendix is added to the paper where some important results, which are used in the paper, are formulated and their proofs are sketched.

\section{Properties of the operator $\bP$}
We study first several properties $\bP$.
\begin{propos} \label{P1} Operator $\bP$ transforms the set $D$ or $\bar{D}$ into itself.\linebreak
 It is continuous with respect to weak topology in $\mathcal{M}_{1}(\mathbb R_{+})$ i.e.
 whenever \linebreak $\mathcal{M}_{1}(\mathbb R_{+})\ni\mu_n\Rightarrow \mu$ we have $\bP \mu_n\Rightarrow \bP \mu$ as $n\to \infty$.
 Furthermore whenever $m_r(\varphi_i)=\int\limits_{\mathbb R_+} x^r \varphi_i(dx)<\infty$ and $m_r(\mu)=\int\limits_{\mathbb R_+} x^r \mu(dx)<\infty$ for $r\geq 1$, $i=1,2,\ldots$  then
 \begin{equation}\label{rmom}
 m_r(\bP\mu)\leq\sum_{i=1}^\infty \alpha_i m_r(\varphi_i) i^r m_r(\mu).
\end{equation}
 \end{propos}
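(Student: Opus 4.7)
The plan is to exploit the probabilistic interpretation of $\bP$ given in \eqref{s4.w1.6adodm} throughout, reducing each of the three assertions to a standard fact about independent random variables, plus a uniform-tail estimate for the infinite sum in \eqref{equ61}.

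\textbf{Preservation of $D$ and $\bar D$.} First I would note that each $\bP_i = P_{\varphi_i} P_{\ast i}$ maps $\mathcal{M}_1(\mathbb{R}_+)$ into itself (as already remarked after \eqref{equ63} and \eqref{equ62}), so by $\sum \alpha_i = 1$ the series $\bP\mu = \sum_i \alpha_i \bP_i \mu$ is a convex combination of probability measures, hence belongs to $\mathcal{M}_1(\mathbb{R}_+)$. For the first moment, if $\zeta$ is the random variable defined in \eqref{s4.w1.6adodm} with law $\bP\mu$, independence gives
\begin{equation*}
m_1(\bP\mu) = \E[\zeta] = \sum_{i=1}^\infty \alpha_i\, \E[\eta_i]\, \sum_{j=1}^i \E[\xi_{ij}] = \sum_{i=1}^\infty \alpha_i \cdot \frac{1}{i} \cdot i\, m_1(\mu) = m_1(\mu),
\end{equation*}
using $m_1(\varphi_i) = 1/i$. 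Hence $m_1(\mu) = 1$ gives $m_1(\bP\mu) = 1$, and $m_1(\mu) \le 1$ gives $m_1(\bP\mu) \le 1$.

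\textbf{Weak continuity.} Assume $\mu_n \Rightarrow \mu$ in $\mathcal{M}_1(\mathbb{R}_+)$. For each fixed $i$, the continuous mapping theorem applied to $(\xi_{i1},\dots,\xi_{ii}) \mapsto \eta_i(\xi_{i1}+\dots+\xi_{ii})$ together with independence of the $\xi_{ij}$ and $\eta_i$ yields $\bP_i \mu_n \Rightarrow \bP_i \mu$. To pass to the infinite sum, fix a bounded continuous $f$ and, for $\varepsilon>0$, choose $N$ with $\sum_{i>N}\alpha_i < \varepsilon/(4\|f\|_\infty)$. Then
\begin{equation*}
|\langle f, \bP\mu_n - \bP\mu\rangle| \le \sum_{i=1}^N \alpha_i\,|\langle f, \bP_i\mu_n - \bP_i\mu\rangle| + 2\|f\|_\infty \sum_{i>N}\alpha_i,
\end{equation*}
and the finite sum tends to $0$ as $n \to \infty$ by what was just proved for each $i$. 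Since $\varepsilon$ is arbitrary, $\bP\mu_n \Rightarrow \bP\mu$.

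\textbf{Moment bound.} Again I would use the representation \eqref{s4.w1.6adodm}. By independence,
\begin{equation*}
m_r(\bP\mu) = \E[\zeta^r] = \sum_{i=1}^\infty \alpha_i\, \E[\eta_i^r]\, \E\!\left[\Big(\sum_{j=1}^i \xi_{ij}\Big)^{\!r}\right] = \sum_{i=1}^\infty \alpha_i\, m_r(\varphi_i)\, \E\!\left[\Big(\sum_{j=1}^i \xi_{ij}\Big)^{\!r}\right].
\end{equation*}
Since $r\ge 1$, Jensen's inequality applied to the convex function $x\mapsto x^r$ on $\mathbb{R}_+$ gives $\bigl(\tfrac{1}{i}\sum_{j=1}^i \xi_{ij}\bigr)^{r} \le \tfrac{1}{i}\sum_{j=1}^i \xi_{ij}^r$, whence $\E\bigl[(\sum_{j=1}^i\xi_{ij})^r\bigr] \le i^{r-1}\cdot i\, m_r(\mu) = i^r m_r(\mu)$. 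Substituting yields \eqref{rmom}.

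The only nontrivial step is the interchange of $\bP$ with weak limits; this is exactly the uniform-tail estimate above. Everything else is a short calculation using the probabilistic picture of \eqref{s4.w1.6adodm} plus Jensen.
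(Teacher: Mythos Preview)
Your proof is correct and follows essentially the same approach as the paper: both use the probabilistic representation \eqref{s4.w1.6adodm}/\eqref{defzeta} together with convexity (Jensen) for the moment bound, and treat weak continuity componentwise via the product-measure/continuous-mapping argument. If anything, your proof is slightly more careful than the paper's, since you make explicit the uniform-tail estimate $\sum_{i>N}\alpha_i<\varepsilon$ needed to pass from $\bP_i\mu_n\Rightarrow\bP_i\mu$ to $\bP\mu_n\Rightarrow\bP\mu$, which the paper simply asserts ``immediately''.
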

 \begin{proof}
Note first that for each $\mu\in \mathcal{M}_{1}(\mathbb R_{+})$ we have that $P_{\varphi_i} P_{*_i}\mu \in \mathcal{M}_{1}(\mathbb R_{+})$ for $i=1,2,\ldots$ and therefore  $\bP\mu \in \mathcal{M}_{1}(\mathbb R_{+})$. Moreover for $\mu\in D$
\begin{equation}
m_1(P_{\varphi_i}P_{*_i}\mu)=\int_0^\infty \ldots \int_0^\infty x(y_1+y_2+\ldots+y_i)\varphi_i(dx)\mu(dy_1)\ldots\mu(dy_i)={1\over i}i=1
\end{equation}
so that $P_{\varphi_i}P_{*_i}\mu\in D$ and consequently $\bP:D \mapsto D$. Similarly $\bP:\bar{D} \mapsto \bar{D}$.
\linebreak For a given continuous bounded function $f:\mathbb R_{+}\to \mathbb{R}$ and $\mathcal{M}_{1}(\mathbb R_{+})\ni\mu_n\Rightarrow \mu$, as $n\to \infty$ we have
\begin{eqnarray}\label{conv1}
&& P_{\varphi_i} P_{*_i}\mu_n(f)=\int_0^\infty \ldots \int_0^\infty f(x(y_1+y_2+\ldots+y_i))\varphi_i(dx)\mu_n(dy_1)\ldots\mu_n(dy_i)\to \nonumber \\
&&\int_0^\infty \ldots \int_0^\infty f(x(y_1+y_2+\ldots+y_i))\varphi_i(dx)\mu(dy_1)\ldots\mu(dy_i)=P_{\varphi_i} P_{*_i}\mu(f)
\end{eqnarray}
as $n\to \infty$.
In fact, from continuity of
$$
(y_1,y_2,\ldots,y_i) \to \int_0^\infty f(x(y_1+y_2+\ldots+y_i))\varphi_i(dx),
$$
and weak convergence of the measures $\mu_n(dy_1)\ldots\mu_n(dy_i)\Rightarrow \mu(dy_1)\ldots\mu(dy_i)$, using \eqref{conv1} we immediately obtain that $\bP \mu_n\Rightarrow \bP \mu$ which is desired continuity property.
Now using $\zeta$ defined in \eqref{defzeta}, independency of random variables, as well as convexity we obtain
\begin{eqnarray}
 m_r(\bP\mu)&=&\int\limits_{\mathbb R_+} x^r \bP\mu(dx)=E\left[\zeta^r\right]=\sum_{i=1}^\infty \alpha_i E\left[\eta^r_i\right] i^r E\left[\left({1\over i}\sum_{j=1}^i \xi_{ \tau j}\right)^r\right] \nonumber \\
&\leq& \sum_{i=1}^\infty \alpha_i m_r(\eta_i) i^r m_r(\mu),
\end{eqnarray}
which completes the proof.
\end{proof}
We comment below the formula \eqref{rmom}
\begin{rem} Note that since $m_1(\varphi_i)={1\over i}$ we have $\int\limits_{\mathbb R_+} x \varphi_i(dx)\leq \left(\int\limits_{\mathbb R_+} x^r \varphi_i(dx)\right)^{{1\over r}}$  and consequently $m_r(\varphi_i)\geq {1\over i^r}$, so that in general we may not have that
$\sum\limits_{i=1}\limits^\infty \alpha_i m_r(\varphi_i) i^r < \infty$.

This sum is finite however when for example we know that for a sufficiently large $i$ we have that $\alpha_i m_r(\varphi_i)\leq {1\over i^{\beta}}$ with $\beta>1+r$. Finiteness of the sum above shall play an important role in the approach to study fixed points of $\bP$ with the use of Zolotariev seminorm (see section 3).
\end{rem}
In what follows we are interested to find a fix point of the operator $\bP$ in the set $D$. It is clear clear $\mu=\delta_0$ is a fixed point of $\bP$ in $\bar{D}$. Typical way to find a fixed point is to consider iterations $\bP\mu$ for $\mu\in D$ and since the measures $\left\{\bP\mu,\bP^2\mu, \ldots\right\}$ are tight, expect a limit to be a fixed point. However even when such iteration converges in a weak topology, its limit will be in $\bar{D}$ and it is not clear that such limit will be a fixed point. Note furthermore that when we have more than one particle, the operator $\bP$ is nonlinear and we can not use several techniques typical for linear operators.

\begin{rem} Notice that fixed point is not necessary in $D$ since the set $D$ is not closed in the weak topology. In fact, when $\mu_n\left\{{1\over n}\right\}={n\over n+1}$ and  $\mu_n\left\{{n}\right\}={1\over n+1}$, then $\mu_n\in D$ and $\mu_n\Rightarrow \mu:=\delta_0$ so that total mass of $\mu$ is concentrated at $0$, and $\int_0^\infty x \mu(dx)=0$.
Similary for $0< \alpha <1$, letting $\mu_n(1-\alpha)=1-{1\over n}$, $\mu_n(1)={1-\alpha \over n}$ and
$\mu_n(n)={\alpha \over n}$ we clearly have that $\mu_n\in D$ and $\mu_n\Rightarrow \delta_{1-\alpha}$, as $n\to \infty$. One can show that the closure $\bar{D}$ of the set $D$ in the weak topology consists of all probability measures $\nu \in \mathcal{M}_{1}(\mathbb R_{+})$ such that $m_1(\nu)\leq 1$.
Consider the following example: $P\left\{\varphi_i=0\right\}={n\over n+1}$,
$P\left\{\varphi_i=(n+1){1\over i}\right\}={1\over n+1}$ and $\mu([\Delta,\infty))=1$ with $m_1(\mu)=1$, $\Delta >0$ and fixed integer $n\geq 1$. Then the support of $\bP\mu$ consists of $0$ and the second part contained in $[\Delta({n+1\over i}), \infty)$ and inductively the support of $\bP^k\mu$ contains $0$ and its second part is contained in $[\Delta({(n+1)\over  i})^{k}, \infty)$, for positive integer $k$. Since $\bP^k\mu(0)={n\over n+1}$ it is clear that $\bP^k\mu\Rightarrow \delta_0$, as $k\to \infty$, so that limit of $\bP^k\mu$, which is also a fixed point of $\bP$, is not in $D$.

On the other hand, when $P\left\{\varphi_i=\delta\right\}={n\over (n+1)-i\delta}$ and $P\left\{\varphi_i=(n+1){1\over i}\right\}={1-i\delta\over (n+1)-i\delta}$ with $i\delta<1$ and  $\mu([\Delta,\infty))=1$ with $m_1(\mu)=1$, $\Delta >0$ we have that
$\cup_k supp(\bP^k\mu)$ is dense in $[0,\infty)$.

When support of $\varphi_i$ contains a sequence of positive real numbers converging to $0$, then  $\cup_{k} supp(\bP^k\mu)$ is dense in $[0,\infty)$ no matter what $\mu\in D$ is chosen.   Assume additionally that ${1\over i}\in supp \varphi_i$ for each $i=1,2,\ldots$. Then  $supp(\bP^k\mu)\subset supp(\bP^{k+1}\mu)$, so that we have an increasing sequence of closed sets $supp(\bP^k\mu)$ which cover the interval $(0,\infty)$.
\end{rem}

Next two Propositions show properties of the fixed point of $\bP$.

Let $\Lambda:=\left\{i: \alpha_i>0 \right\}$
\begin{propos}
Let $\mu\in {D}$ be a fixed point of $\bP$. When there are at least two elements of $\Lambda$ and $\tilde{\varphi}_i=\delta_1$ for $i \in \Lambda$ then either $\mu=\delta_1$ or $m_\beta(\mu)=\infty$ for any $\beta>1$. When $\mu=\delta_1$ then  $\tilde{\varphi}_i=\delta_1$ for $i \in \Lambda$.
\end{propos}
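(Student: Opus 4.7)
The plan is to use the probabilistic representation \eqref{defzeta} together with the hypothesis $\tilde{\varphi}_i=\delta_1$ for $i\in\Lambda$ to reduce $\bP\mu$ to an average of i.i.d.\ copies of $\mu$, and then to apply strict Jensen's inequality at the moment level.

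First I would observe that the hypothesis $\tilde{\varphi}_i=\delta_1$ means $i\eta_i=1$ a.s., so that the representation \eqref{defzeta} together with \eqref{impf} collapses to
\begin{equation*}
\bP\mu=\sum_{i\in\Lambda}\alpha_i\,\tilde{P}_{*_i}\mu,
\end{equation*}
where $\tilde{P}_{*_i}\mu$ is the law of $\tfrac{1}{i}(\xi_{i1}+\ldots+\xi_{ii})$ with $\xi_{ij}$ i.i.d.\ $\sim\mu$. Fix $\beta>1$ and suppose $m_\beta(\mu)<\infty$. From the fixed point equation $\mu=\bP\mu$ and Proposition~\ref{P1} I get
\begin{equation*}
m_\beta(\mu)=m_\beta(\bP\mu)=\sum_{i\in\Lambda}\alpha_i\,\E\!\left[\left(\tfrac{1}{i}\sum_{j=1}^i\xi_{ij}\right)^{\!\beta}\right].
\end{equation*}

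Next, for each $i\in\Lambda$, convexity of $x\mapsto x^\beta$ on $\R_+$ (Jensen's inequality applied to the uniform average over $j=1,\ldots,i$) gives
\begin{equation*}
\left(\tfrac{1}{i}\sum_{j=1}^i\xi_{ij}\right)^{\!\beta}\leq \tfrac{1}{i}\sum_{j=1}^i \xi_{ij}^\beta\quad\text{a.s.,}
\end{equation*}
and taking expectations yields $\E[(\tfrac{1}{i}\sum_j\xi_{ij})^\beta]\leq m_\beta(\mu)$. Summing against $\alpha_i$ and comparing with the displayed fixed point identity, I conclude that equality must hold for every $i\in\Lambda$ with $\alpha_i>0$.

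Since $\Lambda$ has at least two elements, there is some $i_0\in\Lambda$ with $i_0\geq 2$. Now comes the key step: because $x\mapsto x^\beta$ is strictly convex for $\beta>1$, equality in Jensen's inequality for the average of $i_0\geq 2$ values forces $\xi_{i_0,1}=\xi_{i_0,2}=\cdots=\xi_{i_0,i_0}$ almost surely. As these are independent with common law $\mu$, this is only possible when $\mu$ is a Dirac mass; combined with $m_1(\mu)=1$ (since $\mu\in D$), this forces $\mu=\delta_1$. This establishes the dichotomy: either $\mu=\delta_1$ or $m_\beta(\mu)=\infty$ for every $\beta>1$.

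For the converse, assume $\mu=\delta_1$ is a fixed point. Then $\tilde{P}_{*_i}\mu=\delta_1$ for each $i$, and by \eqref{impf} the law of $\zeta$ from \eqref{defzeta} reduces to the law of $\tilde{\eta}_\tau$, so that $\bP\mu=\sum_{i\in\Lambda}\alpha_i\tilde{\varphi}_i$. The identity $\delta_1=\sum_{i\in\Lambda}\alpha_i\tilde{\varphi}_i$ with $\alpha_i>0$ and $\tilde{\varphi}_i\in\mathcal{M}_1$ forces $\tilde{\varphi}_i=\delta_1$ for every $i\in\Lambda$, as claimed.

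The main obstacle I expect is rigorously justifying the strict Jensen step: one must ensure that the identically distributed independent random variables collapsing to a common value forces that value to be deterministic, which is immediate but must be explicitly noted to rule out a random common value. Everything else is a clean moment computation using the probabilistic representation.
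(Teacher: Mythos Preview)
Your proof of the dichotomy is essentially the same as the paper's: both reduce $\bP\mu$ to an $\alpha_i$-mixture of laws of i.i.d.\ averages via $\tilde\varphi_i=\delta_1$, compute the $\beta$-th moment, apply strict Jensen to get $\E[(\tfrac{1}{i}\sum_j\xi_{ij})^\beta]\leq m_\beta(\mu)$, and then observe that the fixed-point identity forces equality for each $i\in\Lambda$, which for some $i_0\geq 2$ collapses the independent $\xi_{i_0,j}$ to a common deterministic value, hence $\mu=\delta_1$.

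For the converse you take a genuinely different route. The paper argues via moments: from $\mu=\delta_1$ it deduces $\sum_{i\in\Lambda}\alpha_i\,m_\beta(\tilde\varphi_i)=1$, and since $m_1(\tilde\varphi_i)=1$ gives $m_\beta(\tilde\varphi_i)\geq 1$ by Jensen, equality $m_\beta(\tilde\varphi_i)=1=m_1(\tilde\varphi_i)$ forces each $\tilde\varphi_i$ to be $\delta_1$. You instead observe directly that $\bP\delta_1=\sum_{i\in\Lambda}\alpha_i\tilde\varphi_i$, so the fixed-point equation reads $\delta_1=\sum_{i\in\Lambda}\alpha_i\tilde\varphi_i$ as measures, and a convex combination of probability measures equals a Dirac only if every summand with positive weight is that Dirac. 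Your argument is shorter and avoids the moment machinery entirely; the paper's version has the minor advantage of staying within the same $m_\beta$ framework used in the first half.
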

\begin{proof}
Using \eqref{impf} we have that the law of
$\tilde{\eta}_\tau({1\over \tau}\sum_{j=1}^\tau \xi_{\tau j})$,
when the law of $\xi_{ij}$ is $\mu$, is also $\mu$. Then for $\beta>1$ we have
\begin{equation}\label{imineq0}
E\left\{\tilde{\eta}_\tau^\beta({1\over \tau}\sum_{j=1}^\tau \xi_{\tau j})^\beta\right\}=\sum_{i=1}^\infty \alpha_i m_\beta(\tilde{\varphi}_i) E\left\{({1\over i}\sum_{j=1}^i  \xi_{i j})^\beta \right\} = m_\beta(\mu).
\end{equation}
Notice now that by strict convexity  we have  for $\beta>1$ that
$\left({1\over i} \sum\limits_{j=1}^i \xi_{i j}\right)^\beta < {1\over i} \sum\limits_{j=1}^i \xi_{i j}^\beta$
with equality only when $\xi_{i j}=\xi_{i j'}$ for $j'\in \left\{1,2,\ldots,i\right\}$.

Therefore when there are at least one element in $\Lambda$ before $i\in \Lambda$ and $\tilde{\varphi}_j=\delta_1$ for $j \in \Lambda$ we have
\begin{equation}\label{imineq1}
E\left\{({1\over i}\sum_{j=1}^i \tilde{\eta}_i\xi_{i j})^\beta \right\} < \sum_{j=1}^i {1\over i}E\left\{\tilde{\eta}_i^\beta\xi_{ij}^\beta\right\}=m_\beta(\tilde{\eta}_i) m_\beta(\mu)=m_\beta(\mu)
\end{equation}
with strict inequality whenever $m_\beta(\mu)<\infty$. Since $\xi_{i j}$ and $\xi_{i j'}$ are independent for $j'\neq j$, they should be deterministic and because $m_1(\mu)=1$ we have that $\mu=\delta_1$. When $\mu=\delta_1$ by
 \eqref{imineq0} and \eqref{imineq1} we have
 \begin{equation}
 E\left\{\tilde{\eta}_\tau^\beta({1\over \tau}\sum_{j=1}^\tau \xi_{\tau j})^\beta\right\}=m_\beta(\tilde{\varphi}_i) m_\beta(\mu)=m_\beta(\mu)
 \end{equation}
 which implies that $m_\beta(\tilde{\varphi}_i)=1=m_1(\tilde{\varphi}_i)$, which is again true only when  $\tilde{\varphi}_i=\delta_1$.
\end{proof}
In the next Proposition we adopt some arguments of Proposition 2.1 of \cite{lastr}
\begin{propos}\label{supp}
Assume that for $1\neq k\in \Lambda$ there are $q_1, q_2\in supp \varphi_k$ such that $q_1>{1\over k}$, $q_2<{1\over k}$. Under the above assumptions, when $\mu^*$ is a fixed point of $\bP$, its support is either $\left\{0\right\}$ or $[0,\infty)$.
\end{propos}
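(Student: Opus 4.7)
Set $S := \operatorname{supp}(\mu^*)$. If $\mu^* = \delta_0$ then $S=\{0\}$ and we are done; otherwise pick some $y_0\in S$ with $y_0>0$ and aim to prove $S = [0,\infty)$. The whole argument rests on a single closure property of $S$. Since $\alpha_k > 0$ and $\mu^* = \bP\mu^* \ge \alpha_k P_{\varphi_k}P_{*k}\mu^*$, one has $\operatorname{supp}(P_{\varphi_k}P_{*k}\mu^*)\subseteq S$; writing out the supports of the $k$-fold convolution and of the multiplicative product yields
\[
y_1,\ldots,y_k\in S,\ q\in\operatorname{supp}\varphi_k \ \Longrightarrow\ q(y_1+\cdots+y_k)\in S.
\]

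Setting $y_1=\cdots=y_k=y$ and $q\in\{q_1,q_2\}$ gives $ay,by\in S$ whenever $y\in S$, where $a:=q_1k>1$ and $b:=q_2k<1$. Iterating, $a^m b^n y_0\in S$ for all $m,n\ge 0$; since $b^n y_0\to 0$ and $S$ is closed, $0\in S$. Using $0\in S$ in combination with the closure property, applied to $j$ copies of $y\in S$ together with $k-j$ copies of $0$, we obtain
\[
q_1 j y\in S\quad\text{and}\quad q_2 j y\in S \qquad\text{for every }y\in S,\ j\in\{0,1,\ldots,k\}.
\]
This is the extra additive flexibility furnished by the assumption $k\ge 2$ (which holds because $1\ne k\in\Lambda$).

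Next, by induction on $N$ I would prove that $\{\,j(b/k)^N y_0 : j=0,1,\ldots,k^N\,\}\subseteq S$. The base case is $\{0,y_0\}\subseteq S$. For the inductive step, any $j\in\{0,\ldots,k^{N+1}\}$ decomposes (greedily) as $j=j_1+\cdots+j_k$ with $j_r\in\{0,\ldots,k^N\}$; each summand $j_r(b/k)^N y_0$ belongs to $S$ by hypothesis, and applying the closure relation with $q=q_2=b/k$ produces $j(b/k)^{N+1}y_0\in S$. Because $y\mapsto ay$ also preserves $S$, iterating it $M$ times yields the arithmetic progression
\[
\bigl\{\,j\,a^M(b/k)^N y_0 : j=0,1,\ldots,k^N\,\bigr\}\subseteq S,
\]
of common difference $\delta := a^M b^N y_0/k^N$ and maximum $a^M b^N y_0$.

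For density, fix $t>0$ and $\varepsilon>0$. Pick $N$ with $k^N>(at+y_0)/\varepsilon$, then let $M\ge 0$ be the smallest integer with $a^M b^N y_0\ge t$; considering the two cases $M=0$ and $M\ge 1$ separately one checks $a^M b^N y_0\le\max(at,y_0)\le at+y_0$, whence $\delta<\varepsilon$ while the progression still reaches past $t$. The point $\lfloor t/\delta\rfloor\delta$ then lies in $S$ within $\varepsilon$ of $t$, so $S$ is dense in $[0,\infty)$ and, being closed, equals $[0,\infty)$. The principal technical work is the inductive step producing fine arithmetic progressions near $0$ and the subsequent geometric calibration of $(M,N)$ in the density estimate; the degenerate case $q_2=0$ would need a separate brief argument (replacing $q_2$ by a nearby strictly positive point of $\operatorname{supp}\varphi_k$ when such a point exists).
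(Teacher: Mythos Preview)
Your proof is correct and follows essentially the same strategy as the paper's: both establish the closure property $q(y_1+\cdots+y_k)\in S$ for $q\in\operatorname{supp}\varphi_k$, deduce $0\in S$ by iterating the contraction $y\mapsto (kq_2)y$, and then build arithmetic progressions $\{\,j\,q_2^{N}(kq_1)^{M} y_0 : j=0,\ldots,k^{N}\,\}\subseteq S$ whose step can be made arbitrarily small while their range reaches any prescribed height. Your inductive construction of these progressions and your calibration of the pair $(M,N)$ are spelled out more explicitly than in the paper (which simply asserts the inclusion $iq_2^m(kq_1)^n r\in A$ for $i=1,\ldots,k^m$), and you correctly flag the degenerate case $q_2=0$, which the paper's argument also tacitly excludes.
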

\begin{proof}
Notice that the support of each $\varphi_i$ contains an element not greater that ${1\over i}$. Therefore taking into account that $q_2<{1\over k}$ we have that when $0\neq r\in supp \mu^*$ then the support of $\bP \mu^*$ contains an element not greater than  $$\sum\limits_{i=1, i\neq k}\limits^\infty \alpha_i r+\alpha_k q_2kr=\sum\limits_{i=1}\limits^\infty \alpha_i r+\alpha_k(q_2-{1\over k})kr=r(1-(1-kq_2)\alpha_k)).$$ Since  $(1-(1-kq_2)\alpha_k)<1$ iterating the operator $\bP$ we obtain that $0\in supp \mu^*$. Therefore to show that $supp \mu^*$ in the case when $supp \mu^*\neq \left\{0\right\}$ covers whole interval $[0,\infty)$ it suffices to show that for any $r\in (0,\infty)\cap supp \mu^*$ we have that $A:=supp \left\{\bP_k^i\mu^*, \ for \ i=1,2,\ldots\right\}$ is dense in $[0,r]$.
For a given $\varepsilon>0$ we can find positive integer $m$ such that $\left({1\over k}\right)^m\leq
 {\varepsilon \over r k q_1}$ and $(kq_2)^m\leq 1$. Then we can find positive integer $n$ such that $(kq_1)^{n-1}(kq_2)^m\leq ({1 \over k})^mkq _1\leq 1$ and $(kq_1)^{n}(kq_2)^m > 1$. Consequently we have that
$q_2^m (kq_1)^n\leq {\varepsilon\over r}$ and $k^m q_2^m (kq_1)^n \geq 1$. Therefore $iq_2^m (kq_1)^nr\in A$ for $i=1,2,\ldots,k^m$, and $q_2^m (kq_1)^nr\leq \varepsilon$ while $k^mq_2^m (kq_1)^nr\geq r$. Since $\varepsilon$ can be chosen arbitrarily small and $A$ is closed we have that $[0,\infty)\subset A$, which we obtain taking into account that together with $r$ the values $kq_1^i$ are in $supp \mu^*$.
\end{proof}
\begin{rem}
In some cases there is only one fixed point of $\bP$ which is $\delta_0\notin D$. Given probability measure $\mu$ on  $\mathbb R_{+}$ consider its characteristic function $\psi(t)=\int e^{itx}\mu(dx)$. If $\mu$ is a fixed point of $\bP$ then in the case when $\Lambda=\left\{2\right\}$,  $\psi$ satisfies the following function equation
\begin{equation}\label{eqqq}
\psi(t)=\int\limits_{\mathbb R_{+}} \left[\psi(tz)\right]^2\varphi_2(dz)
\end{equation}
Assume $\varphi_2={1\over 2} \delta_0 + {1\over 2} \delta_1$. Then \eqref{eqqq} takes the form
\begin{equation}
\psi(t)={1\over 2} + {1\over 2} \left[\psi(t)\right]^2
\end{equation}
The solutions to this quadratic equation are constant functions $\psi(t)$ and since $\psi(0)=1$ the only solution which is a characteristic function is $\psi(t)\equiv 1$, which corresponds to $\mu=\delta_0$. Consequently we don't have fixed point of $\bP$ in the set $D$.
\end{rem}

In the space $ {\mathcal M}_1(\mathbb R_{+}) $ consider   {\it Kantorovich - Wasserstein metric}  (see \cite{1 hutchinson}, Definition 4.3.1 or \cite{Bogachev}) given by the
formula
\begin{equation}\label{subsec2:1.2}
 \| \mu_1 - \mu_2 \|_{\mathcal
K} = \sup \{ | \mu_1(f) - \mu_2(f) | : \ f \in
{\mathcal K} \} \qquad\text{for}\qquad \mu_1, \mu_2\in {\mathcal
M}_{1}(\mathbb R_{+}),
 \end{equation}
 where ${\mathcal K}$ is the set of functions $
f: \mathbb R_{+} \to R $ which satisfy the condition
\begin{equation*}
 \ | f(x) - f(y) | \le |x-y| \quad \text{\rm for} \quad x, y \in \mathbb R_{+} .
\end{equation*}
We recall now another, Forter-Mourier metric $\|\cdot \|_{\mathcal F}$ in $ {\mathcal M}_1(\mathbb R_{+}) $
\begin{equation}
 \| \mu_1 - \mu_2 \|_{\mathcal
F} = \sup \{ | \mu_1(f) - \mu_2(f) | : \ f \in
{\mathcal F} \} \qquad\text{for}\qquad \mu_1, \mu_2\in {\mathcal
M}_{1}(\mathbb R_{+}),
\end{equation}
where ${\mathcal F}$ consists of functions $f$ from ${\mathcal K}$ such that $\|f\|_{sup}=\sup\limits_{x\in \mathbb R_{+}}|f(x)|\leq 1$.

Almost immediately we have that  $\| f \|_{\mathcal F}\leq \| f \|_{\mathcal K}$ for $\mu\in {\mathcal M}_{1}(\mathbb R_{+})$ which means that Kantorovich - Wasserstein metric  is stronger than Fortet Mourier metric. Notice however that  the convergence of probability measures in Fortet Mourier metric is equivalent to weak convergence  (see \cite{ethier kurtz} or \cite{Bogachev}). We have

\begin{lem}
The operator $\bP$ is nonexpansive in $\bar{D}$ in Kantorovich - Wasserstein metric.
\end{lem}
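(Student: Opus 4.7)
The plan is to prove nonexpansiveness by a coupling argument, exploiting the probabilistic representation of $\bP\mu$ as the law of $\zeta$ in \eqref{defzeta} together with the Kantorovich--Rubinstein duality
$$
\|\mu_1-\mu_2\|_{\mathcal K}=\inf_{\pi}\int_{\mathbb R_+\times\mathbb R_+}|x-y|\,\pi(dx,dy),
$$
where the infimum runs over couplings $\pi$ of $\mu_1$ and $\mu_2$. Since $\mu_1,\mu_2\in\bar D$ have finite first moments, the duality applies and an optimal coupling $(X,Y)$ with $X\sim\mu_1$, $Y\sim\mu_2$ and $E|X-Y|=\|\mu_1-\mu_2\|_{\mathcal K}$ exists.

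First I would fix $f\in\mathcal K$ and construct, on a common probability space, independent copies $(X_{ij},Y_{ij})$ of the optimal coupling $(X,Y)$ together with an independent family $\eta_i\sim\varphi_i$ and $\tau$ with $P\{\tau=i\}=\alpha_i$, all mutually independent. By \eqref{s4.w1.6adodm} the random variables
$$
\zeta^1:=\eta_\tau\sum_{j=1}^{\tau}X_{\tau j},\qquad \zeta^2:=\eta_\tau\sum_{j=1}^{\tau}Y_{\tau j}
$$
have laws $\bP\mu_1$ and $\bP\mu_2$ respectively.

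Next I would use that $f\in\mathcal K$ is $1$-Lipschitz together with $\eta_i\geq0$ and independence to estimate
\begin{align*}
|\bP\mu_1(f)-\bP\mu_2(f)|
&=\bigl|E[f(\zeta^1)-f(\zeta^2)]\bigr|
\le E|\zeta^1-\zeta^2|
\le E\Bigl[\eta_\tau\sum_{j=1}^{\tau}|X_{\tau j}-Y_{\tau j}|\Bigr]\\
&=\sum_{i=1}^{\infty}\alpha_i\,E[\eta_i]\cdot i\cdot E|X-Y|
=\sum_{i=1}^{\infty}\alpha_i\cdot\tfrac{1}{i}\cdot i\cdot\|\mu_1-\mu_2\|_{\mathcal K}
=\|\mu_1-\mu_2\|_{\mathcal K},
\end{align*}
using $m_1(\varphi_i)=\tfrac{1}{i}$ and $\sum_i\alpha_i=1$. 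Taking the supremum over $f\in\mathcal K$ yields $\|\bP\mu_1-\bP\mu_2\|_{\mathcal K}\le\|\mu_1-\mu_2\|_{\mathcal K}$.

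The main (mild) obstacle is to justify the duality and the existence of the optimal coupling for measures in $\bar D$: this is standard because every $\mu\in\bar D$ has $m_1(\mu)\le 1<\infty$, so the $\mathcal K$-metric coincides with the $1$-Wasserstein distance on $\bar D$ and the Kantorovich--Rubinstein theorem delivers the optimal $\pi$. The rest is a bookkeeping computation in which the factors $i$ and $1/i$ from the convolution and the scaling $\eta_i$ cancel, which is precisely what the normalization $m_1(\varphi_i)=1/i$ was designed to produce.
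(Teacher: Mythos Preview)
Your argument is correct. It differs from the paper's proof in that you work on the \emph{primal} (coupling) side of the Kantorovich--Rubinstein duality, while the paper stays entirely on the \emph{dual} (test function) side. Concretely, the paper fixes $f\in\mathcal K$ and uses the telescoping identity
\[
\bP_i\mu(f)-\bP_i\nu(f)=\mu(\tilde f)-\nu(\tilde f),
\]
where $\tilde f$ is built from $f$ by integrating out all but one coordinate against the mixed products $\mu^{\otimes k}\otimes\nu^{\otimes(i-1-k)}$ and against $\varphi_i$; the condition $m_1(\varphi_i)=1/i$ guarantees $\tilde f\in\mathcal K$, whence $\|\bP_i\mu-\bP_i\nu\|_{\mathcal K}\le\|\mu-\nu\|_{\mathcal K}$ and the result follows by summing in $i$.

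Your route instead synchronously couples the randomness $(\tau,\eta_i)$ and plugs in i.i.d.\ copies of an optimal coupling of $(\mu_1,\mu_2)$; the same cancellation $i\cdot\tfrac1i=1$ then appears as $E[\eta_i]\cdot i$. The paper's approach is slightly more self-contained (no appeal to existence of an optimal transport plan), while yours is more probabilistic and would adapt verbatim to $W_p$ for $p\ge 1$ (yielding $\|\bP\mu_1-\bP\mu_2\|_{W_p}\le\bigl(\sum_i\alpha_i\,m_p(\varphi_i)\,i^p\bigr)^{1/p}\|\mu_1-\mu_2\|_{W_p}$, which is exactly the estimate behind \eqref{prop3}--\eqref{prop4} in the Zolotarev section). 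Both proofs are short and the essential content is the same normalization $m_1(\varphi_i)=1/i$.
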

\begin{proof}
We are going to show that for each $i=1,2,\ldots$ the operators $\bP_i$ are nonexpansive in Kantorovich - Wasserstein metric.
In fact, for $f\in {\mathcal K}$ we and $\mu\in \bar{D}$ we have
\begin{equation}
\bP_i\mu(f)=\int\limits_{\mathbb R_{+}} \ldots \int\limits_{\mathbb R_{+}} f(r(x_1+x_2+\ldots+x_i))\varphi_i(dr)\mu(dx_1)\mu(dx_2)\ldots \mu(dx_i).
\end{equation}
Define for $\nu\in \bar{D}$
\begin{eqnarray}
&&\tilde{f}(x):=\int\limits_{\mathbb R_{+}} \ldots \int\limits_{\mathbb R_{+}}f(r(x+x_2+\ldots+x_i)){\varphi}_i(dr)(\mu(dx_2)\ldots \mu(dx_i)+ \nonumber \\
&& \nu(dx_2)\mu(dx_3)\ldots \mu(dx_i)+\nu(dx_2)\nu(dx_3)\mu(dx_4)\ldots \mu(dx_i)+\ldots + \nonumber \\
&& \nu(dx_2)\nu(dx_3)\ldots \nu(dx_i)).
\end{eqnarray}
Then $\tilde{f}\in {\mathcal K}$ (since $m_1({\varphi_i})={1\over i}$).
Furthermore
\begin{equation}
\bP_i\mu(f)-\bP_i\nu(f)=\mu(\tilde{f})-\nu(\tilde{f}).
\end{equation}
Hence $\|\bP_i\mu - \bP_i\nu \|_{\mathcal K}\leq \|\mu-\nu\|_{\mathcal K}$ and consequently
\begin{equation}
\|\bP\mu-\bP\nu\|_{\mathcal K}\leq \sum_{i=1}^\infty \alpha^i \|\bP_i\mu - \bP_i\nu \|_{\mathcal K}\leq \|\mu-\nu\|_{\mathcal K},
\end{equation}
which completes the proof.
\end{proof}
\begin{rem}
One can notice that operator $\bP$ is not nonexpansive in Fortet Mourier metric.
\end{rem}
We also have
\begin{coll}\label{cornon}
Operator $\bP$ transforms $D$ into itself and is defined also as a limit of $\sum\limits_{i=1}\limits^{n} \alpha_i \bP_i$  in  Kantorovich - Wasserstein metric. Furthermore $D$ is convex and closed in Kantorovich - Wasserstein metric.
\end{coll}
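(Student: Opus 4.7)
The corollary collects three statements, which I would address in turn. The inclusion $\bP(D)\subset D$ is already contained in Proposition \ref{P1}, so no new argument is required for it; the substantive work is (a) justifying that the series $\sum_{i=1}^\infty\alpha_i\bP_i$ converges to $\bP$ in the Kantorovich--Wasserstein sense when applied to $\mu\in D$, and (b) showing convexity and KW-closedness of $D$.

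I would dispatch (b) first, as it is the lightest. Convexity is immediate: for $\mu,\nu\in D$ and $t\in[0,1]$, linearity of $m_1$ gives $m_1(t\mu+(1-t)\nu)=t+(1-t)=1$. For closedness, suppose $\mu_n\in D$ and $\mu_n\to\mu$ in $\|\cdot\|_{\mathcal K}$. The identity $f(x)=x$ is $1$-Lipschitz and therefore belongs to $\mathcal K$, so
\[
|m_1(\mu_n)-m_1(\mu)|=|\mu_n(f)-\mu(f)|\le\|\mu_n-\mu\|_{\mathcal K}\xrightarrow[n\to\infty]{}0,
\]
forcing $m_1(\mu)=1$ and hence $\mu\in D$. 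Finiteness of $m_1(\mu)$ is automatic here, since $m_1(\mu)=\infty$ would force $\|\mu_n-\mu\|_{\mathcal K}=\infty$ for every $n$, contradicting convergence.

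For (a), the key observation is that for each $\mu\in D$ the tail
\[
\rho_n:=\bP\mu-\sum_{i=1}^n\alpha_i\bP_i\mu=\sum_{i=n+1}^\infty\alpha_i\bP_i\mu
\]
is a \emph{non-negative} measure whose total mass and first moment both equal $1-s_n:=\sum_{i>n}\alpha_i$ (because each $\bP_i\mu\in D$ by Proposition \ref{P1}). For any $f\in\mathcal K$, the translate $g:=f-f(0)$ also lies in $\mathcal K$ and satisfies $|g(x)|\le x$, so
\[
|\rho_n(g)|\le\rho_n(x)=1-s_n\longrightarrow 0.
\]
Since adding a constant to the test function does not affect the comparison of two measures of equal total mass, this yields the required KW-convergence once the comparison is set up correctly.

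The main obstacle is precisely this last technical point: the partial sums $\sum_{i=1}^n\alpha_i\bP_i\mu$ are sub-probability measures, so the metric $\|\cdot\|_{\mathcal K}$ in \eqref{subsec2:1.2} (tailored to $\mathcal M_1(\mathbb R_+)$) does not apply verbatim. I would resolve this either by extending $\|\cdot\|_{\mathcal K}$ to signed measures via the restriction $f(0)=0$ (which is equivalent to the original definition on differences of equal-mass measures by the translation argument above), or, more cleanly, by augmenting the partial sum to a probability measure $\tilde S_n\mu:=\sum_{i=1}^n\alpha_i\bP_i\mu+(1-s_n)\delta_0\in\mathcal M_1(\mathbb R_+)$ and verifying $\|\tilde S_n\mu-\bP\mu\|_{\mathcal K}\le 1-s_n$ by the very same estimate. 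Either route converts the tail bound into genuine KW-convergence, and in fact the preceding Lemma already relies on this compatibility implicitly when it splits $\bP-\bP\nu$ termwise using the triangle inequality in $\|\cdot\|_{\mathcal K}$; the present Corollary simply records the identification that makes that step legitimate.
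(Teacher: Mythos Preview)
Your argument is correct, but the route you take for the convergence statement differs from the paper's. The paper does not bound the tail directly; instead it \emph{normalizes} the partial sums to probability measures $\bigl(\sum_{i=1}^n\alpha_i\bigr)^{-1}\sum_{i=1}^n\alpha_i\bP_i\mu\in D$, observes that these converge weakly to $\bP\mu$ while their first moments stay equal to $1=m_1(\bP\mu)$, and then invokes Theorem~\ref{unif} (weak convergence plus convergence of first moments $\Longleftrightarrow$ convergence in $\|\cdot\|_{\mathcal K}$). Your approach bypasses Theorem~\ref{unif} entirely by using only the pointwise bound $|f(x)-f(0)|\le x$ for $f\in\mathcal K$, and you are in fact more careful than the paper about the sub-probability nature of the unnormalized partial sums: the paper simply asserts the conclusion for $\sum_{i=1}^n\alpha_i\bP_i\mu$ after proving it for the normalized version, whereas your augmentation by $(1-s_n)\delta_0$ (or the $f(0)=0$ restriction) makes this passage explicit. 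Your closedness argument via $f(x)=x\in\mathcal K$ is essentially the same as the paper's, just spelled out.
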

\begin{proof} Notice that for $\mu\in D$ we have $\left(\sum\limits_{i=1}\limits^n \alpha_i\right)^{-1} \sum\limits_{i=1}\limits^n \alpha_i \bP_i\mu\Rightarrow \bP\mu$ as $n\to \infty$ and $$m_1(\left(\sum_{i=1}^n \alpha_i\right)^{-1} \sum_{i=1}^n \alpha_i \bP_i\mu)=1=m_1(\bP\mu).$$ Therefore by the second part of Theorem  \ref{unif} we have that $\bP\mu$ is a limit of $\sum\limits_{i=1}\limits^n \alpha_i \bP_i\mu$ in  Kantorovich - Wasserstein metric. Convexity of $D$ is obvious. Closedness of $D$ in Kantorovich - Wasserstein metric follows almost immediately from the following arguments: convergence in Kantorovich - Wasserstein metric implies weak convergence and therefore the limit is a probability measure. Furthermore convergence in  Kantorovich - Wasserstein metric implies convergence of the first moments.
\end{proof}

\section{Fixed point of $\bP$ using Zolotariev seminorm}

In this section we shall introduce Zolotariev seminorm. Namely for  $\mu\in \mathcal{M}_{sig}(\mathbb{R}_{+})$ and $r\in (1,2)$ define
\begin{equation}
\|\mu\|_r:=\sup\left\{\mu(f): f\in \mathcal{F}_r\right\}
\end{equation}
 where $\mathcal{F}_r$ consists of differentiable functions $f: \mathbb R_{+} \to R $, which satisfy the condition
\begin{equation*}
 \ | f'(x) - f'(y) | \le |x-y|^{r-1} \quad \text{\rm for} \quad x, y \in \mathbb R_{+} .
\end{equation*}
One can notice that when $f\in \mathcal{F}_r$ then also function $f(x)+\alpha+\beta x$ is in $\mathcal{F}_r$.
Therefore $\|\mu\|_r=\infty$ whenever $\mu(\mathbb{R}_{+})\neq0$ or $m_1(\mu)\neq 0$. The following properties of Zolotariev seminorm will be used later on
\begin{lem}
Assume that
\begin{equation}\label{ass1}
\sum_{i=1}^\infty \alpha_i m_r(\phi_i) i^r<\infty
\end{equation}
then for $\mu, \nu\in D$, whenever $m_r(\mu)<\infty$ and $m_r(\nu)<\infty$ for $i=1,2,\ldots$, we have
\begin{equation}\label{prop1}
{1\over r}|m_r(\mu)-m_r(\nu)| \leq \|\mu-\nu\|_r\leq {1 \over r} |m_r|(\mu-\nu):={1\over r}\int\limits_{\mathbb{R}_{+}} x^r |\mu-\nu|(dx),
\end{equation}
\begin{equation}\label{prop2}
\|P_{*_i}\mu-P_{*_i}\nu\|_r\leq i \|\mu-\nu\|_r,
\end{equation}
\begin{equation}\label{prop3}
\| P_{\varphi_i}P_{*_i}\mu- P_{\varphi_i} P_{*_i}\nu\|_r\leq m_r(\varphi_i) i \|\mu-\nu\|_r,
\end{equation}
\begin{equation}\label{prop4}
\|\bP\mu-\bP\nu\|_r\leq \sum_{i=1}^\infty \alpha_i m_r(\phi_i) i \|\mu-\nu\|_r.
\end{equation}
\end{lem}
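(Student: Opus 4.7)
The plan is to prove the four inequalities in order, since each one feeds into the next.

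For (\ref{prop1}), I would treat the two bounds separately. For the lower bound, test the seminorm on $f_0(x):=x^r/r$. Since $r-1\in(0,1)$, the map $t\mapsto t^{r-1}$ is concave and vanishes at $0$, hence subadditive: for $x\geq y\geq 0$ one has $x^{r-1}=(y+(x-y))^{r-1}\leq y^{r-1}+(x-y)^{r-1}$, which gives $|x^{r-1}-y^{r-1}|\leq|x-y|^{r-1}$. Thus $\pm f_0\in\mathcal{F}_r$, and evaluating $(\mu-\nu)(f_0)=\tfrac{1}{r}(m_r(\mu)-m_r(\nu))$ yields the lower bound. For the upper bound, the essential remark is that since $\mu,\nu\in D$ agree on total mass and first moment, $(\mu-\nu)(a+bx)=0$ for all constants $a,b$. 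So given $f\in\mathcal{F}_r$, replace it by $g(x):=f(x)-f(0)-f'(0)x$; this lies in $\mathcal{F}_r$ (its derivative differs from $f'$ only by a constant), satisfies $g(0)=g'(0)=0$, and by the Hölder bound on $g'$ obeys $|g(x)|\leq\int_0^x t^{r-1}\,dt=x^r/r$. Then $|(\mu-\nu)(f)|=|(\mu-\nu)(g)|\leq\tfrac{1}{r}\int x^r\,|\mu-\nu|(dx)$, as desired.

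For (\ref{prop2}), I would use the telescoping identity
\[
\mu^{\otimes i}-\nu^{\otimes i}=\sum_{j=1}^{i}\nu^{\otimes(j-1)}\otimes(\mu-\nu)\otimes\mu^{\otimes(i-j)}.
\]
For each $j$, integrating $f(x_1+\cdots+x_i)$ against every factor other than the $j$-th produces a function $f_j(x)$ whose derivative is an average of $f'$ values and therefore still satisfies the Hölder condition $|f_j'(x)-f_j'(y)|\leq|x-y|^{r-1}$; hence $f_j\in\mathcal{F}_r$. Each of the $i$ pieces contributes at most $\|\mu-\nu\|_r$, giving the factor $i$.

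For (\ref{prop3}), set $h(y):=\int_{\mathbb{R}_+}f(ry)\varphi_i(dr)$, so that $(P_{\varphi_i}P_{*_i}\mu-P_{\varphi_i}P_{*_i}\nu)(f)=(P_{*_i}\mu-P_{*_i}\nu)(h)$. Differentiating under the integral gives $h'(y)=\int rf'(ry)\varphi_i(dr)$, and
\[
|h'(y)-h'(z)|\leq\int r\cdot|ry-rz|^{r-1}\varphi_i(dr)=m_r(\varphi_i)\,|y-z|^{r-1},
\]
so $h/m_r(\varphi_i)\in\mathcal{F}_r$. Applying (\ref{prop2}) to this rescaled test function and multiplying through by $m_r(\varphi_i)$ yields (\ref{prop3}). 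Finally (\ref{prop4}) comes by writing $\bP=\sum_i\alpha_i\bP_i$ and applying (\ref{prop3}) term by term: for the partial sums the seminorm inequality follows from the triangle inequality, and one passes to the limit using Corollary \ref{cornon} (which gives convergence of $\sum_{i=1}^{n}\alpha_i\bP_i\mu$ to $\bP\mu$ in the Kantorovich--Wasserstein, hence weak, topology) together with the lower semicontinuity of $\|\cdot\|_r$ under weak convergence; assumption (\ref{ass1}) ensures the resulting majorant is summable.

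The main obstacle I expect is the upper bound in (\ref{prop1}): it is the pivot linking the abstract Zolotarev seminorm to a concrete moment estimate, and it relies crucially on both $\mu,\nu$ lying in $D$ (so that $\mu-\nu$ annihilates affine functions and the normalization $f\mapsto g$ is legitimate). Once that is in place, the telescoping in (\ref{prop2}) and the scaling trick in (\ref{prop3}) are essentially mechanical, and (\ref{prop4}) is a routine passage to the limit.
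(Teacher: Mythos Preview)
Your proposal is correct and follows essentially the same route as the paper: the lower bound in \eqref{prop1} via the test function $x^r/r$, the upper bound by subtracting the affine part (the paper writes this as $f(x)=f(0)+\int_0^1 xf'(tx)\,dt$ and subtracts $f'(0)$, which is exactly your normalization $f\mapsto g$), the same telescoping decomposition for \eqref{prop2}, the same scaling $\tilde f(x)=m_r(\varphi_i)^{-1}\int f(zx)\varphi_i(dz)\in\mathcal{F}_r$ for \eqref{prop3}, and then summing for \eqref{prop4}. If anything you are slightly more careful than the paper, which simply asserts that \eqref{prop4} follows ``immediately'' from \eqref{prop3} without discussing the passage to the infinite sum.
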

\begin{proof}
When $f\in \mathcal{F}_r$ then $f(x)=f(0)+\int_0^1 x f'(tx)dt$. Therefore
\begin{eqnarray}
 \mu(f)-\nu(f) &=& \int\limits_{\mathbb{R}_{+}}\int\limits_0\limits^1 x f'(tx)\mu(dx)-\int\limits_{\mathbb{R}_{+}}\int\limits_0\limits^1 x f'(tx)dt\nu(dx) \nonumber \\ &=& \int\limits_{\mathbb{R}_{+}}\int\limits_0\limits^1 x(f'(tx)-f'(0))dt\mu(dx)-\int_{\mathbb{R}_{+}}\int_0^1 x (f'(tx)-f'(0))dt\nu(dx) \nonumber \\
&\leq &  \int\limits_{\mathbb{R}_{+}}\int\limits_0\limits^1 x|f'(tx)-f'(0)|dt |\mu(dx)-\nu(dx)| \nonumber \\
&=&\int\limits_{\mathbb{R}_{+}}\int\limits_0\limits^1 x |t x|^{r-1}dt |\mu(dx)-\nu(dx)|={1\over r} |m_r|(\mu-\nu),
\end{eqnarray}
which completes the proof of the second part of \eqref{prop1}.
For $f(x)={1\over r}x^r$ we have that $f\in \mathcal{F}_r$ and then
\begin{equation}
\|\mu-\nu\|_r\geq {1\over r} |\int\limits_{\mathbb{R}_{+}} x^r\mu(dx) - \int\limits_{\mathbb{R}_{+}} x^r \nu(dx)|\geq {1\over r} |m_r(\mu)-m_r(\nu)|.
\end{equation}
Therefore we have \eqref{prop1}.

To prove \eqref{prop2} notice that $P_{*_i}\mu-P_{*_i}\nu=\sum\limits_{j=1}\limits^i \mu_j *\mu -  \sum\limits_{j=1}\limits^i \mu_j *\nu$
where  $\mu_j=(P_{*_{i-j}}\mu)*(P_{*_{j-1}}\nu)$ with $P_{*_0}\mu=P_{*_0}\nu=\delta_0$. When $f\in\mathcal{F}_r$ then  $\bar{f}(y)=\int
\limits_{\mathbb{R}_{+}}f(x+y)\mu_j(dx)$ is in $\mathcal{F}_r$. Therefore
\begin{eqnarray}
\|\mu_j*(\mu-\nu)\|_r&=&\sup_{f\in {\mathcal{F}_r}}|\int\limits_{\mathbb{R}_{+}}\int\limits_{\mathbb{R}_{+}} f((x+y)\mu_j(dx)(\mu-\nu)(dy)| \nonumber \\
&=&\sup_{f\in {\mathcal{F}_r}}|\int\limits_{\mathbb{R}_{+}} \bar{f}(y)(\mu-\nu)(dy)|\leq \|\mu-\nu\|_r,
\end{eqnarray}
from which \eqref{prop2} easily follows.
Now
\begin{eqnarray}
\| P_{\varphi_i}P_{*_i}\mu- P_{\varphi_i} P_{*_i}\nu\|_r &=&\sup_{f\in {\mathcal{F}_r}} \left(\int\limits_{\mathbb{R}_{+}}\int\limits_{\mathbb{R}_{+}} f(zx)\varphi_i(dz)P_{*_i}\mu(dx)- \int\limits_{\mathbb{R}_{+}}\int\limits_{\mathbb{R}_{+}} f(zx)\varphi_i(dz)P_{*_i}\nu(dx)\right) \nonumber \\
&\leq &
 m_r(\varphi_i)\sup_{f\in {\mathcal{F}_r}} \left( \int\limits_{\mathbb{R}_{+}} \tilde{f}(x)P_{*_i}\mu(dx)- \int\limits_{\mathbb{R}_{+}} \tilde{f}(x)P_{*_i}\nu(dx)\right) \nonumber \\
&\leq&  m_r(\varphi_i) \|P_{*_i}\mu-P_{*_i}\nu\|_r,
\end{eqnarray}
where $\tilde{f}(x)={1\over m_r(\varphi_i)}\int\limits_{\mathbb{R}_{+}}f(zx)\varphi_i(dx)$ is an element of $\mathcal{F}_r$,  provided that $f\in \mathcal{F}_r$. The estimation \eqref{prop3} follows now directly from \eqref{prop2}. From  \eqref{prop3} we  immediately obtain \eqref{prop4}.
\end{proof}

The main result of this section is the existence of a fixed point of $\bP$.  We use the same assumptions as in the paper \cite{20 lasota traple}, where similar result was obtained. Our proof is different and shows another properties of the fixed point of $\bP$. It is formulated as follows

\begin{thm}\label{fixedpoint}
Assume that for some $r\in (1,2)$ we have $m_r(\varphi_i)< {1\over i}$ for all $i \in \Lambda$, \eqref{ass1} is satisfied and there is $\mu \in D$ such that $m_r(\mu)<\infty$.
Then $\bP^n\mu$ converges in  Kantorovich - Wasserstein metric to a unique $\mu^*\in D$, which is a fixed point of $\bP$ such that  $m_r(\mu^*)<\infty$.
\end{thm}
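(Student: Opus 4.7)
The strategy is to apply a contraction–mapping argument in the Zolotarev seminorm $\|\cdot\|_r$, and then transfer the resulting convergence to the Kantorovich--Wasserstein metric using the uniform $r$-moment control that the contraction itself provides.

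First, I would identify the contraction constant. The assumption $m_r(\varphi_i)<1/i$ for $i\in\Lambda$ gives $\alpha_i m_r(\varphi_i)\,i<\alpha_i$ whenever $\alpha_i>0$, so summing across all indices yields
\[
q:=\sum_{i=1}^{\infty}\alpha_i m_r(\varphi_i)\,i<\sum_{i\in\Lambda}\alpha_i=1.
\]
By \eqref{prop4}, $\bP$ is thus a strict $q$-contraction in $\|\cdot\|_r$ on measures of finite $r$-th moment. The initial distance $\|\bP\mu-\mu\|_r$ is finite because \eqref{rmom} and hypothesis \eqref{ass1} bound $m_r(\bP\mu)$, and then the right half of \eqref{prop1} gives $\|\bP\mu-\mu\|_r\le\tfrac1r(m_r(\bP\mu)+m_r(\mu))<\infty$. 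Iterating, $\|\bP^{n+1}\mu-\bP^{n}\mu\|_r\le q^{n}\|\bP\mu-\mu\|_r$, so $(\bP^{n}\mu)$ is Cauchy in $\|\cdot\|_r$ with uniform bound $S:=\|\bP\mu-\mu\|_r/(1-q)$; the left half of \eqref{prop1} then yields $m_r(\bP^{n}\mu)\le m_r(\mu)+rS$ uniformly in $n$.

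Next, I would transfer this Cauchy property to $\|\cdot\|_{\mathcal K}$. For $\mu,\nu\in D$ whose $r$-th moments are both $\le M$ and a $1$-Lipschitz $f$ with $f(0)=0$ (so $|f(x)|\le x$), I would split $f=f\phi_R+f(1-\phi_R)$ using a smooth cutoff $\phi_R$ supported on $[0,2R]$. The tail is controlled by Chebyshev:
\[
\bigl|(\mu-\nu)\bigl(f(1-\phi_R)\bigr)\bigr|\le\int_R^{\infty}x\,d(\mu+\nu)\le 2M/R^{r-1},
\]
while the compactly supported piece $f\phi_R$ can be mollified to a $C^2$ function whose suitable scalar multiple lies in $\mathcal F_r$, giving an estimate of the form $|(\mu-\nu)(f\phi_R)|\le C_{R,\varepsilon}\|\mu-\nu\|_r+C'\varepsilon$. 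Choosing $R$ large (tail small by the uniform $m_r$-bound), then $\varepsilon$ small, then $n,m$ large (Zolotarev-Cauchy) shows that $(\bP^{n}\mu)$ is also Cauchy in $\|\cdot\|_{\mathcal K}$. Completeness of the K-W metric on probability measures with finite first moment furnishes a limit $\mu^*$; by the K-W-closedness of $D$ (Corollary \ref{cornon}) we have $\mu^*\in D$, and Fatou applied to the uniform bound $m_r(\bP^{n}\mu)\le m_r(\mu)+rS$ gives $m_r(\mu^*)<\infty$.

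Finally, the fixed-point equation and uniqueness follow painlessly. The preceding nonexpansiveness lemma gives $\bP(\bP^{n}\mu)\to\bP\mu^*$ in K-W as $\bP^{n}\mu\to\mu^*$; the left-hand side equals $\bP^{n+1}\mu\to\mu^*$, forcing $\bP\mu^*=\mu^*$. If $\mu^{**}\in D$ is any other fixed point with $m_r(\mu^{**})<\infty$, then $\|\mu^*-\mu^{**}\|_r=\|\bP\mu^*-\bP\mu^{**}\|_r\le q\|\mu^*-\mu^{**}\|_r$ forces $\|\mu^*-\mu^{**}\|_r=0$; since $\mathcal F_r$ contains scaled versions of all $C^2$-compactly supported functions along with $1$ and $x$, this seminorm separates probability measures with the same first moment, so $\mu^*=\mu^{**}$. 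I expect the technical heart to be the second step: $\mathcal F_r$ and $\mathcal K$ are not nested test classes, so the tail-plus-mollification argument is unavoidable and rests critically on the uniform $r$-moment bound derived from the contraction itself.
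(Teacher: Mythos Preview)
Your plan is correct and shares the paper's backbone---the Zolotarev contraction with constant $\lambda=\sum_i\alpha_i m_r(\varphi_i)i<1$ together with the uniform bound $m_r(\bP^n\mu)\le m_r(\mu)+\tfrac{r}{1-\lambda}\|\bP\mu-\mu\|_r$ coming from \eqref{prop1}---but the two proofs diverge in how they pass from $\|\cdot\|_r$ to $\|\cdot\|_{\mathcal K}$ and in how the fixed-point property is extracted. The paper does not build a tail-plus-mollification bridge; it simply quotes Rio's inequality (Theorem~\ref{Rio}), $\|\mu-\nu\|_{\mathcal K}\le 2(2\|\mu-\nu\|_r)^{1/r}$, which immediately converts Zolotarev smallness into K--W smallness. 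With that tool the paper argues subsequentially: tightness yields a weak limit $\mu^*$, Corollary~\ref{impcor} upgrades this to K--W convergence along the subsequence, and then a short ``constant gap'' trick (showing $\|\bP^{n+1}\mu^*-\bP^n\mu^*\|_{\mathcal K}$ is independent of $n$ while $\|\bP^{n+1}\mu^*-\bP^n\mu^*\|_r\to 0$, hence by Rio the K--W gap vanishes) forces $\bP\mu^*=\mu^*$. Your route is more self-contained---you effectively reprove a weak form of Rio's estimate by hand---and your fixed-point step (nonexpansiveness in K--W gives $\bP(\bP^n\mu)\to\bP\mu^*$, which equals the limit of $\bP^{n+1}\mu$) is cleaner than the paper's subsequential gap argument. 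The trade-off is that your mollification step carries constants $C_{R,\varepsilon}\sim\varepsilon^{-1}$ that must be tracked, whereas the paper's one-line appeal to Theorem~\ref{Rio} avoids that bookkeeping entirely; if the Rio inequality is available to you, using it would shorten your second paragraph to a sentence.
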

\begin{proof} Under \eqref{ass1} using Proposition \ref{P1} we have that $m_r(\bP^n\mu)<\infty$ for $n=1,2,\ldots$.
By \eqref{prop4} we have that $\|\bP^{j+1}\mu-\bP^j\mu\|_r\leq \lambda^j |\bP\mu-\mu\|_r$, where $\lambda=\sum\limits_{i=1}\limits^\infty \alpha_i m_r(\phi_i) i$. Therefore
\begin{equation}
\|\bP^n\mu-\mu\|_r=\|\sum\limits_{j=1}\limits^n \bP^{j}\mu-\bP^{j-1}\mu\|_r\leq \sum\limits_{j=1}\limits^n \lambda^j \|\bP\mu-\mu\|_r,
\end{equation}
where $\bP^0\mu=\mu$. Since by \eqref{prop1}
\begin{equation}
\|\bP^n\mu-\mu\|_r\geq {1\over r}|m_r(\bP^n\mu)-m_r(\mu)|,
\end{equation}
we therefore have that for $n=1,2,\ldots$
\begin{equation}\label{rmom}
m_r(\bP^n\mu)\leq {r\over 1-\lambda}\|\bP\mu-\mu\|_r+m_r(\mu):=\kappa < \infty.
\end{equation}
The sequence of probability measures $\left\{\mu,\bP\mu,\bP^2\mu,\ldots\right\}$ is tight and therefore there is $\mu^*$ and subsequence $(n_k)$ such that $\bP^{n_k}\mu\Rightarrow \mu^*$ as $k\to \infty$. By \eqref{rmom} and Corollary \ref{impcor}  we have that $\|\bP^{n_k}\mu -\mu^*\|_{\mathcal{K}}\to 0$ as $k\to \infty$. Consequently $\mu^*\in D$.  Since for $K>0$
\begin{equation}
\int\limits_{\mathbb{R}_{+}}(x^r\wedge K)\bP^{n_k}\mu(dx)\leq m_r(\bP^{n_k}\mu)\leq \kappa
\end{equation}
and therefore letting $k\to \infty$ we have that $\int_{\mathbb{R}_{+}}(x^r\wedge K)\mu^*(dx)\leq \kappa$, which by Fatou lemma gives that $m_r(\mu^*)\leq \kappa$.
Now
\begin{equation}\label{rozw1}
\|\bP^{n_{k}}\bP\mu-\bP^{n_{k}}\mu\|_{\mathcal{K}}=\|\bP\bP^{n_{k}}\mu-\bP^{n_{k}}\mu\|_{\mathcal{K}}\to \|\bP\mu^*-\mu^*\|_{\mathcal{K}}:=\beta
\end{equation}
and
\begin{eqnarray}\label{rozw2}
&&\|\bP \bP \mu^* - \bP\mu^*\|_{\mathcal{K}}=\lim_{k\to \infty} \|\bP \bP \bP^{n_{k}}\mu - \bP\bP^{n_{k}}\mu\|_{\mathcal{K}}\geq \nonumber \\
&&\lim_{k\to \infty} \|\bP \bP^{n_{k+1}}\mu - \bP^{n_{k+1}}\mu\|_{\mathcal{K}}=\|\bP\mu^*-\mu^*\|_{\mathcal{K}}=\beta,
\end{eqnarray}
so that taking into account that $\|\bP \bP \mu^* - \bP\mu^*\|_{\mathcal{K}}\leq \|\bP\mu^*-\mu^*\|_{\mathcal{K}}$ we obtain that
$\|\bP \bP \mu^* - \bP\mu^*\|_{\mathcal{K}} = \|\bP\mu^*-\mu^*\|_{\mathcal{K}}$.
By small modification of \eqref{rozw1} and \eqref{rozw2} we obtain that for any $n=0,1,\ldots$
\begin{equation}
\|\bP^{n+1} \mu^* - \bP^n\mu^*\|_{\mathcal{K}} = \|\bP\mu^*-\mu^*\|_{\mathcal{K}}.
\end{equation}
On the other hand by \eqref{prop4} we have that $\|\bP^{n+1} \mu^* - \bP^n\mu^*\|_r\leq \lambda^n \|\bP\mu^*-\mu^*\|_r$.
Therefore $\lim\limits_{n\to \infty}\|\bP^{n+1} \mu^* - \bP^n\mu^*\|_r=0$. By Theorem \ref{Rio} we also have that $\lim\limits_{n\to \infty}\|\bP^{n+1} \mu^* - \bP^n\mu^*\|_{\mathcal{K}}=0$. Therefore $\|\bP\mu^*-\mu^*\|_{\mathcal{K}}=0$ and $\mu^*$ is a fixed point of $\bP$. If there is another weak limit $\nu^*$ of subsequence of $\bP^n\mu$, then $\|\bP^n\mu^*-\bP^n\nu^*\|_r\to 0$, as $n\to \infty$ and by Theorem \ref{Rio} again we have that $\|\mu^*-\nu^*\|_{\mathcal{K}}=\lim\limits_{n\to \infty}\|\bP^{n+1} \mu^* - \bP^n\mu^*\|_{\mathcal{K}}=0$.
Consequently any weak limit of a subsequence of  $\bP^n\mu$ is equal to $\mu^*$, which means that $\bP^n\mu\Rightarrow \mu^*$ and the convergence also holds in Kantorovich - Wasserstein metric.
\end{proof}
\begin{rem}
We may not exclude the case in which we have another fixed point $\nu^*\in D$ of $\bP$ such that $m_r(\nu^*)=\infty$ for each $r>1$.
Notice furthermore that in the case when $\varphi_i$ is uniformly distributed over the interval $[0,{1\over i}]$ then we have  $m_r(\varphi_i)={1\over r+1} \left({2\over i}\right)^r<{1\over i}$ for all $r\in (1,2)$ and $i\in \Lambda\setminus\left\{1\right\}$, so that if \eqref{ass1} is satisfied and $1\notin \Lambda$, by the above theorem we have existence of a unique fixed point of $\bP$ in $D$ with finite $r$-th moment.
\end{rem}

\section{Contraction property of the operator $\bP$}

We have the following generalization of Theorem 5.2.2 of \cite{7}
\begin{thm}\label{thm71}
Assume for some $i\in \Lambda$ we have that $0$  is an accumulation point of $\varphi_i$, where $m_1(\varphi_i)={1\over i}$.
Then for $\mu,\nu\in {D}$  such that $\mu\neq \nu$ and
\begin{equation}\label{equ75}
supp(P_{*(i-1)}(\mu+\nu))=\mathbb{R}_+
\end{equation}

we have
\begin{equation}
\|\bP_i \mu-\bP_i \nu\|_{\mathcal{K}} < \|\mu-\nu\|_{{\mathcal{K}}}
\end{equation}
and consequently
\begin{equation}
\|\bP \mu-\bP \nu\|_{\mathcal{K}} < \|\mu-\nu\|_{\mathcal{K}}.
\end{equation}
\end{thm}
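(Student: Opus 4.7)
The plan is to work in the primal (coupling) formulation of the Kantorovich--Wasserstein norm, $\|\sigma\|_{\mathcal{K}}=\inf_\pi \int|x-y|\,d\pi$, and to exhibit a single coupling of $\bP_i\mu$ and $\bP_i\nu$ whose transport cost is strictly smaller than $\|\mu-\nu\|_{\mathcal{K}}$. This is cleaner than chasing the supremum over test functions in $\mathcal{K}$ directly.

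Concretely, on one probability space I would take $\eta$ with law $\varphi_i$ together with i.i.d.\ copies $(X_j,Y_j)_{j=1}^i$ of the optimal monotone coupling $(F_\mu^{-1}(U),F_\nu^{-1}(U))$ of $(\mu,\nu)$, with $\eta$ independent of the $(X_j,Y_j)$'s. Set $U:=\eta\sum_{j=1}^i X_j$ and $V:=\eta\sum_{j=1}^i Y_j$; then $U\sim\bP_i\mu$ and $V\sim\bP_i\nu$. Because $\eta\geq 0$ is independent of the i.i.d.\ differences $Z_j:=X_j-Y_j$, the transport cost factorises as
\begin{equation*}
E|U-V|=E\eta\cdot E\left|\sum_{j=1}^i Z_j\right|=\frac{1}{i}\, E\left|\sum_{j=1}^i Z_j\right|,
\end{equation*}
so the crude triangle bound $E|\sum_j Z_j|\leq i\,E|Z_1|=i\,\|\mu-\nu\|_{\mathcal{K}}$ only recovers the nonexpansive estimate. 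The whole content of the theorem lies in upgrading this to a strict inequality.

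The key observation is that the mean equality $m_1(\mu)=m_1(\nu)=1$ together with $\mu\neq\nu$ rules out stochastic dominance: the quantile gap $u\mapsto F_\mu^{-1}(u)-F_\nu^{-1}(u)$ has zero integral on $[0,1]$ while being non-identically zero, so it attains both strictly positive and strictly negative values on sets of positive Lebesgue measure. Consequently $P(Z_1>0)$ and $P(Z_1<0)$ are both positive. Under the support assumption one necessarily has $i\geq 2$ (for $i=1$ the object $P_{*0}(\mu+\nu)$ only makes sense as $\delta_0$, whose support is not $\mathbb{R}_+$), so by independence the event $\{Z_1>0,\,Z_2<0\}$ has positive probability; on this event $|Z_1+Z_2+\cdots+Z_i|<|Z_1|+|Z_2|+\cdots+|Z_i|$ strictly. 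Integrating yields $E|\sum_j Z_j|<i\,\|\mu-\nu\|_{\mathcal{K}}$, and therefore
\begin{equation*}
\|\bP_i\mu-\bP_i\nu\|_{\mathcal{K}}\leq E|U-V|<\|\mu-\nu\|_{\mathcal{K}}.
\end{equation*}

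The statement for $\bP$ itself then follows by sublinearity of the $\mathcal{K}$-seminorm under the positive convex combination $\bP=\sum_j\alpha_j\bP_j$: each $\bP_j$ is nonexpansive by the preceding lemma, the distinguished $\bP_i$ is strictly contracting, and $\alpha_i>0$ since $i\in\Lambda$. I expect the main obstacle to lie in the strict-triangle step, where a pointwise strict inequality on a set of positive probability has to be extracted from the sign-change property of $Z_1$; the rest is bookkeeping. The accumulation-at-zero hypothesis on $\varphi_i$ does not enter this coupling proof directly, but is natural in an alternative dual approach that estimates the Lipschitz constant of the auxiliary function $\tilde f(v)=\int f(r(v+s))\,\varphi_i(dr)\,\rho(ds)$ built against $\rho:=\sum_{j=1}^i P_{*(i-j)}\mu*P_{*(j-1)}\nu$, whose support coincides with $\operatorname{supp}(P_{*(i-1)}(\mu+\nu))=\mathbb{R}_+$ by assumption.
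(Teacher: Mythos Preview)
Your coupling argument is correct and genuinely different from the paper's proof. The paper works in the dual picture: assuming $\|\bP_i\mu-\bP_i\nu\|_{\mathcal K}=\|\mu-\nu\|_{\mathcal K}$, it invokes the Kantorovich--Rubinstein maximum principle (Theorem~\ref{A2}) to produce an optimal $f_0\in\mathcal K$, builds the averaged test function $f_1$, and then uses \emph{both} the accumulation-at-zero hypothesis on $\varphi_i$ and the full-support hypothesis on $P_{*(i-1)}(\mu+\nu)$ to force $f_0(x)=x+c$; this yields $\langle f_0,\bP_i\mu-\bP_i\nu\rangle=0$ and hence a contradiction.

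Your primal argument bypasses all of this. The key input you actually use is only that $m_1(\mu)=m_1(\nu)$ with $\mu\neq\nu$ (forcing the monotone-coupling difference $Z_1=F_\mu^{-1}(U)-F_\nu^{-1}(U)$ to change sign on sets of positive Lebesgue measure) together with $i\geq 2$ (so that independent copies $Z_1,Z_2$ produce strict cancellation with positive probability). The factorisation $E|U-V|=m_1(\varphi_i)\,E|\sum_j Z_j|$ then finishes the job. As you already noticed, neither the accumulation hypothesis on $\operatorname{supp}\varphi_i$ nor the condition \eqref{equ75} is used beyond ensuring $i\geq 2$; your argument therefore establishes a strictly stronger statement with a shorter proof. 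The trade-off is that the paper's dual method is the one that connects naturally with the maximum-principle machinery used elsewhere in the paper (e.g.\ Corollary~\ref{c1}), whereas your approach is self-contained but does not illuminate the structure of the extremal test functions.
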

\begin{proof}
 Recall that $\bP_i={P}_{\varphi_i} {P}_{*_i}$ and assume that $\|\bP_i \mu-\bP_i \nu\|_{\mathcal{K}} = \|\mu-\nu\|_{\mathcal{K}}$. By Theorem \ref{A2} there is $f_0\in \mathcal{K}$ such that
\begin{equation}\label{=}
\|\bP_i \mu-\bP_i \nu\|_{\mathcal{K}}=\langle f_0,\bP_i \mu-\bP_i \nu\rangle.
\end{equation}
Then
\begin{eqnarray}
&&\|\mu-\nu\|_{{\mathcal{K}}}=\int_{{\mathbb{R}_+}^{i+1}}f_0((x_1+x_2+\ldots+x_i)r){\varphi}_i(dr) \nonumber \\
&& \left[\mu(dx_1)\mu(dx_2)\ldots \mu(dx_i)-\nu(dx_1)\nu(dx_2)\ldots \nu(dx_i)\right]=\langle f_1,\mu-\nu \rangle,
\end{eqnarray}
where
\begin{eqnarray}
&&f_1(x)=\int_{{\mathbb{R}_+}^{i}}f_0((x+x_2+\ldots+x_i)r){\varphi}_i(dr) \left[\mu(dx_2)\ldots\mu(dx_i)+ \right. \nonumber \\
&& \nu(dx_2)\mu(dx_3)\ldots\mu(dx_i)+\nu(dx_2)\nu(dx_3)\mu(dx_4)\ldots \mu(dx_i)+ \ldots + \nonumber \\
&&\left. \nu(dx_2)\nu(dx_3)\nu(dx_4)\ldots \mu(dx_i) + \nu(dx_2)\nu(dx_3)\nu(dx_4)\ldots \nu(dx_i)\right].
\end{eqnarray}
Clearly, using again the fact that $m_1(\varphi_i)={1\over i}$ we have that ${f}_1\in \mathcal{K}$.

By Theorem \ref{A2} there are two points $x_1, x_2 \in \mathbb{R}_+$ such that $x_1<x_2$ and $|{f}_1(x_2)-{f}_1(x_1)|=x_2-x_1$. Since ${f}_1$ is nonexpansive (Lipschitz with constant less or equal to $1$) we have that ${f}_1(x)=\theta x+\sigma$, for $x\in (x_1,x_2)$ with $|\theta|=1$. Therefore $|{f}_1(x_1+\varepsilon)-{f}_1(x_1)|=\varepsilon$ for $\varepsilon\in(0,x_2-x_1)$. Replacing $f_0$ by $-f_0$ we may assume that ${f}_1(x_1+\varepsilon)-{f}_1(x_1)=\varepsilon$ for $\varepsilon\in(0,x_2-x_1)$. We are going now to show that for $x\in \mathbb{R}_+$
\begin{equation}\label{id}
f_0(x)=x+c
\end{equation}
with a constant $c\in \mathbb{R}$.
Consider now $u_1,u_2 \in \mathbb{R}_+$ such that $u_1<u_2$. We want to show that then
\begin{equation}\label{contr1}
f_0(u_2)-f_0(u_1)\geq u_2-u_1,
\end{equation}
which by nonexpansiveness of $f_0$ implies that $f_0(u_2)-f_0(u_1)=u_2-u_1$ and therefore $f_0$ is of the form \eqref{id}. Assume conversely that $f_0(u_2)-f_0(u_1)<u_2-u_1$. Since $f_0$ as a Lipschitzian mapping is almost everywhere differentiable there is $\bar{u}\in (u_1,u_2)$ such that $f_0'(\bar{u})<1$ and for $\delta\in (0,\delta_0)$ we have
\begin{equation}
{f_0(\bar{u}+\delta)-f_0(\bar{u}) \over \delta} <1.
\end{equation}
Define
\begin{equation}\label{equal0}
h(y_2,\ldots,y_i,r,\varepsilon)={f_0((x_1+\varepsilon+y_2+\ldots+y_i)r)-f_0((x_1+y_2+\ldots+y_i)r) \over \varepsilon r}.
\end{equation}
By definition of $f_1$ for $\varepsilon\in (0,x_2-x_1)$ we have
\begin{eqnarray}\label{equal1}
&& 1={{f}_1(x_1+\varepsilon) - {f}_1(x_1) \over \ve}= \int_{(\mathbb{R}_+)^{i-1}} \int_{\mathbb{R}_+} h(y_2,\ldots,y_i,r,\ve)r {\phi}_i(dr) \left[\mu(dy_2) \right. \nonumber \\
&&\ldots\mu(dy_i)+
\nu(dy_2)\mu(dy_3)\ldots\mu(dy_i)+\nu(dy_2)\nu(dy_3)\mu(dy_4)\ldots \mu(dy_i)+   \nonumber \\
&& \left. \ldots +\nu(dy_2)\nu(dy_3)\nu(dy_4)\ldots \mu(dy_i) + \nu(dy_2)\nu(dy_3)\nu(dy_4)\ldots \nu(dy_i)\right]:= \nonumber \\
&&\int_{(\mathbb{R}_+)^{i}} h(y_2,\ldots,y_i,r,\ve)q(dy_2,\ldots,dy_i,dr),
\end{eqnarray}
where we define implicitly probability measure $q$. Since $0$ is an accumulation point of $supp \phi$ there is $\bar{r}\in supp {\phi}$ such that $x_1 \bar{r} < \bar{u}$. Then there is $(\bar{y}_2,\bar{y}_3,\ldots,\bar{y}_i)\in supp(P_{*(i-1)}(\mu+\nu))$ such that
\begin{equation}\label{equal2}
\bar{u}-x_1\bar{r}=(\bar{y}_2+\bar{y}_3+\ldots,\bar{y}_i)\bar{r}.
\end{equation}
Consequently for every $\bar{\ve}\in (0,x_2-x_1)$ such that $\bar{\ve}\bar{r}<\delta_0$ we have
\begin{equation}
h(\bar{y}_2+\bar{y}_3+\ldots,\bar{y}_i,\bar{r},\bar{\ve})={f_0(\bar{u}+\bar{\ve} \bar{r})-f_0(\bar{u}) \over \bar{\ve}\bar{r}}<1.
\end{equation}
Since $h\leq 1$ by continuity of $h$ and full support of $P_{*(i-1)}(\mu+\nu)$ we have that
\begin{equation}
\int_{(\mathbb{R}_+)^{i}} h(y_2,\ldots,y_i,r,\ve)q(dy_2,\ldots,dy_i,dr)<1,
\end{equation}
a contradiction to  \eqref{equal1}. Therefore we have equality in \eqref{contr1}. Consequently $f_0(x)=x+c$ for a constant $c$. Since
$\bP_i\mu$ and $\bP_i\nu\in D$ we therefore have $\langle f_0,\bP_i\mu-\bP_i\nu\rangle=m_1(\bP_i\mu)-m_1(\bP_i\nu)=0$ and by \eqref{=} $$\|\bP_i\mu-\bP_i\nu\|_{\mathcal{K}}=\|\mu-\nu\|_{\mathcal{K}}=0,$$
 which contradicts the fact that $\mu\neq \nu$.

\end{proof}

\begin{rem}
Condition $supp(P_{*(i-1)}(\mu+\nu))=\mathbb{R}_+$ is not very restrictive. It holds in particular when $supp(\mu+\nu)=\mathbb{R}_+$ or
when $supp \mu=\mathbb{R}_+$.
\end{rem}

We have the following consequences of Theorem \ref{thm71}

\begin{coll}\label{c1}
If for some $i \in \Lambda$ we have that $0$ is an accumulation point of $\phi_i$ and $\mu^*$ is a weak accumulation point of $\bP^n\mu$ for $\mu \in {D}$  i.e. there is a sequence $(n_k)$ such that $\bP^{n_k}\mu\Rightarrow \mu^*$, as $k\to \infty$ and  $supp \mu^*=\mathbb{R}_+$, $\mu^*\in D$  then $\mu^*$ is a fixed point of $\bP$.
\end{coll}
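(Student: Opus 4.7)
The strategy is a standard nonexpansion-plus-strict-contraction argument. Consider the sequence $d_n := \|\bP^{n+1}\mu - \bP^n\mu\|_{\mathcal{K}}$, which by the nonexpansiveness of $\bP$ in the Kantorovich--Wasserstein metric (shown just before Corollary \ref{cornon}) is nonincreasing, and therefore converges to some $d \geq 0$. The plan is to identify $d$ with $\|\bP\mu^* - \mu^*\|_{\mathcal{K}}$ along the subsequence $(n_k)$, and then invoke Theorem \ref{thm71} to force $d=0$.

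The first substantive step is to upgrade the weak convergence $\bP^{n_k}\mu \Rightarrow \mu^*$ to convergence in the Kantorovich--Wasserstein metric. By Proposition \ref{P1}, $\bP^{n_k}\mu \in D$, and $\mu^* \in D$ by hypothesis, so $m_1(\bP^{n_k}\mu) = 1 = m_1(\mu^*)$. Combined with weak convergence, this coincidence of first moments yields $\|\bP^{n_k}\mu - \mu^*\|_{\mathcal{K}} \to 0$ (the standard characterization of $W_1$-convergence, as invoked via Corollary \ref{impcor} in the proof of Theorem \ref{fixedpoint}). Iterating nonexpansiveness then gives, for every fixed $j \geq 0$,
\[
\|\bP^{n_k+j}\mu - \bP^j\mu^*\|_{\mathcal{K}} \leq \|\bP^{n_k}\mu - \mu^*\|_{\mathcal{K}} \to 0,
\]
so $\bP^{n_k+j}\mu \to \bP^j\mu^*$ in the Kantorovich--Wasserstein metric for each $j$. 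Passing to the limit along $(n_k)$ in $d_{n_k} = \|\bP^{n_k+1}\mu - \bP^{n_k}\mu\|_{\mathcal{K}}$ (using $j=0,1$) yields $d = \|\bP\mu^* - \mu^*\|_{\mathcal{K}}$; the same argument applied to $d_{n_k+1}$ (using $j=1,2$) yields $d = \|\bP^2\mu^* - \bP\mu^*\|_{\mathcal{K}}$.

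It remains to contradict $d > 0$. Suppose $\bP\mu^* \neq \mu^*$. Since $supp\,\mu^* = \mathbb{R}_+$ by hypothesis, the Remark following Theorem \ref{thm71} guarantees that the support condition \eqref{equ75} is satisfied for the pair $(\mu^*, \bP\mu^*)$, and Theorem \ref{thm71} then delivers the strict inequality $\|\bP^2\mu^* - \bP\mu^*\|_{\mathcal{K}} < \|\bP\mu^* - \mu^*\|_{\mathcal{K}}$, i.e.\ $d < d$, a contradiction. Hence $d=0$ and $\bP\mu^* = \mu^*$.

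The most delicate point in the plan is the very first step: rigorously promoting weak convergence to Kantorovich--Wasserstein convergence without any higher-moment assumption on the accumulation point. This is exactly where the hypothesis $\mu^* \in D$ (giving equality of first moments along the subsequence, rather than merely $m_1(\mu^*) \leq 1$) is essential; everything afterwards is bookkeeping with the double limit in $k$ and $j$ and a single application of the strict-contraction Theorem \ref{thm71}.
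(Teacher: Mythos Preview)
Your proof is correct and follows essentially the same route as the paper: upgrade weak convergence to Kantorovich--Wasserstein convergence using $m_1(\bP^{n_k}\mu)=1=m_1(\mu^*)$, use nonexpansiveness to pass the consecutive-difference sequence to the limit along $(n_k)$ and $(n_k+1)$, and then invoke Theorem~\ref{thm71} for the strict contraction contradiction. One small remark: the upgrade step here rests on Theorem~\ref{unif} (weak convergence plus equality of first moments), not on Corollary~\ref{impcor}, since no higher-moment bound is available in this setting.
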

\begin{proof}
Notice that $m_1(\bP^{n_k}\mu)=m_1(\mu^*)$ so that by Theorem \ref{unif} we have that $\|\bP^{n_k}\mu-\mu^*\|_{\mathcal{K}}\to 0$ as $k\to \infty$. Therefore
\begin{equation}
\|\bP^{n_{k+1}}\bP\mu-\bP^{n_{k+1}}\mu\|_{\mathcal{K}}\leq \|\bP^{n_{k}+1}\bP\mu-\bP^{n_{k}+1}\mu\|_{\mathcal{K}}\leq \|\bP^{n_{k}}\bP\mu-\bP^{n_{k}}\mu\|_{\mathcal{K}} \leq \|\bP\mu-\mu\|_{\mathcal{K}},
\end{equation}
so that there is a limit $\lim_{k\to \infty} \|\bP^{n_{k}}\bP\mu-\bP^{n_{k}}\mu\|_{\mathcal{F}_0}:=\beta$ and by continuity
\begin{equation}
\|\bP^{n_{k}}\bP\mu-\bP^{n_{k}}\mu\|_{\mathcal{K}}=\|\bP\bP^{n_{k}}\mu-\bP^{n_{k}}\mu\|_{\mathcal{K}}\to \|\bP\mu^*-\mu^*\|_{\mathcal{K}}=\beta.
\end{equation}
If $\bP\mu^*\neq \mu^*$ and assumptions of Corollary are satisfied then by Theorem \ref{thm71} we have that
$\|\bP \bP \mu^* - \bP\mu^*\|_{\mathcal{K}}<\|\bP\mu^*-\mu^*\|_{\mathcal{K}}=\beta$, while
\begin{eqnarray}
&&\|\bP \bP \mu^* - \bP\mu^*\|_{\mathcal{K}}=\lim_{k\to \infty} \|\bP \bP \bP^{n_{k}}\mu - \bP\bP^{n_{k}}\mu\|_{\mathcal{K}}\geq \nonumber \\
&&\lim_{k\to \infty} \|\bP \bP^{n_{k+1}}\mu - \bP^{n_{k+1}}\mu\|_{\mathcal{K}}=\|\bP\mu^*-\mu^*\|_{\mathcal{K}}=\beta
\end{eqnarray}
and we have a contradiction. Therefore $\bP\mu^* = \mu^*$.
\end{proof}

\begin{coll}\label{c2}
If for some $i \in \Lambda$ we have that $0$ is an accumulation point of $\phi_i$ and $\mu^*\in {D}$ is a fixed point of $\bP$,  then there are no other fixed point $\nu^*\in {D}$. Consequently for $\nu\in D$ we have that any weakly convergent subsequence $\bP^{n_k} \nu$ either converges to $\mu^*$, as $n\to \infty$, or to a measure $\tilde{\mu}\in \bar{D}\setminus D$.
\end{coll}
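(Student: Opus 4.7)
The plan is to reduce both halves of the corollary to the strict-contraction inequality of Theorem \ref{thm71}, the whole argument hinging on pinning down the support of the given fixed point $\mu^{*}$. Since $0$ is an accumulation point of $\phi_{i}$ and $m_{1}(\phi_{i})=1/i$, the support of $\phi_{i}$ must contain points strictly less than $1/i$ (from the accumulation at $0$) and therefore, by a mean-balance argument, also points strictly greater than $1/i$; hence Proposition \ref{supp} applies with $k=i$ (the case $i=1$ is vacuous in Theorem \ref{thm71}), and so $\operatorname{supp}\mu^{*}\in\{\{0\},[0,\infty)\}$. Because $m_{1}(\mu^{*})=1$, the first alternative is excluded and $\operatorname{supp}\mu^{*}=\mathbb{R}_{+}$. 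Since convolution of nonnegative measures with full support on $\mathbb{R}_{+}$ preserves full support, one has $\operatorname{supp}\bigl(P_{*(i-1)}(\mu^{*}+\sigma)\bigr)=\mathbb{R}_{+}$ for every nonnegative measure $\sigma$, which is exactly the ticket needed to invoke Theorem \ref{thm71}.

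For uniqueness, I would suppose $\nu^{*}\in D$ is another fixed point with $\nu^{*}\neq \mu^{*}$. The same argument shows $\operatorname{supp}\nu^{*}=\mathbb{R}_{+}$, so the support hypothesis of Theorem \ref{thm71} holds for the pair $(\mu^{*},\nu^{*})$. That theorem then yields $\|\bP\mu^{*}-\bP\nu^{*}\|_{\mathcal{K}}<\|\mu^{*}-\nu^{*}\|_{\mathcal{K}}$, contradicting $\bP\mu^{*}=\mu^{*}$, $\bP\nu^{*}=\nu^{*}$. Hence no other fixed point exists in $D$.

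For the dichotomy, let $\nu\in D$ and assume that a weak accumulation point $\tilde{\mu}$ of $\bP^{n_{k}}\nu$ lies in $D$. Since $m_{1}(\bP^{n_{k}}\nu)=1=m_{1}(\tilde{\mu})$, Theorem \ref{unif} (used as in Corollary \ref{cornon}) upgrades weak convergence to convergence in Kantorovich--Wasserstein metric, so $\bP^{n_{k}}\nu\to\tilde{\mu}$ in $\|\cdot\|_{\mathcal{K}}$. Applying weak continuity of $\bP$ (Proposition \ref{P1}) together with the same Theorem \ref{unif} upgrade, one also gets $\bP^{n_{k}+1}\nu=\bP(\bP^{n_{k}}\nu)\to \bP\tilde{\mu}$ in $\|\cdot\|_{\mathcal{K}}$. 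By nonexpansiveness of $\bP$ in $\|\cdot\|_{\mathcal{K}}$, the sequence $m\mapsto \|\bP^{m}\nu-\mu^{*}\|_{\mathcal{K}}$ is nonincreasing and converges to some $\gamma\geq 0$; evaluating along the subsequences $(n_{k})$ and $(n_{k}+1)$ yields $\|\tilde{\mu}-\mu^{*}\|_{\mathcal{K}}=\|\bP\tilde{\mu}-\mu^{*}\|_{\mathcal{K}}=\gamma$.

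To close the argument, if $\tilde{\mu}\neq\mu^{*}$, then because $\operatorname{supp}\mu^{*}=\mathbb{R}_{+}$ we still have $\operatorname{supp}(P_{*(i-1)}(\tilde{\mu}+\mu^{*}))=\mathbb{R}_{+}$, so Theorem \ref{thm71} applied to the pair $(\tilde{\mu},\mu^{*})$ produces the strict inequality $\|\bP\tilde{\mu}-\mu^{*}\|_{\mathcal{K}}=\|\bP\tilde{\mu}-\bP\mu^{*}\|_{\mathcal{K}}<\|\tilde{\mu}-\mu^{*}\|_{\mathcal{K}}=\gamma$, contradicting $\|\bP\tilde{\mu}-\mu^{*}\|_{\mathcal{K}}=\gamma$; hence $\tilde{\mu}=\mu^{*}$. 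The main obstacle, and the one really worth checking carefully, is the support condition of Theorem \ref{thm71}: the limit $\tilde{\mu}$ need \emph{not} have full support (mass can concentrate away from the tails along a weakly convergent subsequence), so the whole scheme succeeds precisely because one can compare every candidate limit to the \emph{known} fixed point $\mu^{*}$, whose full support is secured by Proposition \ref{supp}.
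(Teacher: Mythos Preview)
Your proof is correct, and for the uniqueness half it is identical to the paper's: invoke Proposition~\ref{supp} to get $\operatorname{supp}\mu^{*}=\mathbb{R}_{+}$, then derive $\|\mu^{*}-\nu^{*}\|_{\mathcal K}=\|\bP\mu^{*}-\bP\nu^{*}\|_{\mathcal K}<\|\mu^{*}-\nu^{*}\|_{\mathcal K}$ from Theorem~\ref{thm71}.

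For the second half your argument is more explicit than the paper's and in fact closes a small gap there. The paper dispatches the dichotomy in one sentence, asserting that when the weak limit $\tilde{\mu}$ lies in $D$ one has $\tilde{\mu}=\mu^{*}$ ``by uniqueness of fixed point of $\bP$ in $D$'', which tacitly presumes $\tilde{\mu}$ is itself a fixed point; that would follow from Corollary~\ref{c1}, but Corollary~\ref{c1} requires $\operatorname{supp}\tilde{\mu}=\mathbb{R}_{+}$, which is not known. Your route avoids this entirely: you never claim $\tilde{\mu}$ is a fixed point, but instead exploit that $m\mapsto\|\bP^{m}\nu-\mu^{*}\|_{\mathcal K}$ is nonincreasing, identify its limit $\gamma$ along $(n_{k})$ and $(n_{k}+1)$, and then use Theorem~\ref{thm71} with the pair $(\tilde{\mu},\mu^{*})$---where the full-support hypothesis is supplied by $\mu^{*}$, not $\tilde{\mu}$---to force $\gamma=0$. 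This is effectively the mechanism behind Theorem~\ref{contr*} specialised to the present situation, and it is the right way to make the paper's one-line claim rigorous. Your closing remark that the scheme works precisely because one compares against $\mu^{*}$ rather than relying on properties of $\tilde{\mu}$ is exactly the point.
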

\begin{proof} By Proposition \ref{supp} since  $\mu^*\in {D}$ we have that
$supp \mu^*= \mathbb{R}_+$.
Then we use again Theorem \ref{thm71}. Namely when $\nu^*\in {D}$ is another fixed point of $\bP$  we have
\begin{equation}
\|\mu^*-\nu^*\|_{\mathcal{K}}=\|\bP\mu^*-\bP\nu^*\|_{\mathcal{K}}<\|\mu^*-\nu^*\|_{\mathcal{K}},
\end{equation}
which is a contradiction. Any sequence $(\bP^n\nu)$ is compact in Fortet Mourier metric and its subsequence converges to a measure $\tilde{\mu}$, and the convergence is also in Kantorovich Wasserstein metric whenever $m_1(\mu)=1$. In the last case we have $\tilde{\mu}=\mu^*$ by uniqueness of fixed point of $\bP$ in $D$.
\end{proof}

Using Theorem \ref{contr*} we can now strengthen the last Corollary.
\begin{coll}\label{thm72}
Assume there is $i\in \Lambda$ such that $0$  be an accumulation point of $\varphi_i$. Assume that
$\mu^*\in {D}$ is a fix point of $\bP$.  Then for $\nu\in {D}$ for any limit of weakly convergent subsequence $\bP^{n_k} \nu$ which belongs to $D$ is also in Kantorovich Wasserstein metric. Furthermore if the sequence $n\to \bP^n\nu$ is uniformly integrable we have
\begin{equation}\label{conv*}
\lim_{n\to \infty}\|\bP^n \nu-\mu^*\|_{\mathcal{K}}=0.
\end{equation}
\end{coll}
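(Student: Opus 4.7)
The plan is to deduce both assertions from the strict contraction of Theorem~\ref{thm71}, combined with the full--support property of $\mu^*$ (Proposition~\ref{supp}) and the bridge between weak and Kantorovich--Wasserstein convergence provided by Theorem~\ref{unif}.

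For the first assertion, suppose $\bP^{n_k}\nu\Rightarrow\tilde\mu$ weakly with $\tilde\mu\in D$. Since $\bP^{n_k}\nu\in D$ by Proposition~\ref{P1}, the sequence and its weak limit share the same first moment $1$; hence by the second part of Theorem~\ref{unif} (the tool already invoked in the proof of Corollary~\ref{cornon}), the weak convergence automatically upgrades to convergence in Kantorovich--Wasserstein metric.

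For the second assertion, I first observe that $(\bP^n\nu)$ is tight: a uniform first moment equal to $1$ gives the Markov tail bound $\bP^n\nu([K,\infty))\leq 1/K$, so every subsequence admits a further weakly convergent sub-subsequence. Uniform integrability of the identity against $(\bP^n\nu)_n$ then forces any weak sublimit $\tilde\mu$ to satisfy $m_1(\tilde\mu)=\lim_k m_1(\bP^{n_k}\nu)=1$, so $\tilde\mu\in D$; by the first part, the convergence is then in Kantorovich--Wasserstein metric. To identify $\tilde\mu$ with $\mu^*$, I invoke Proposition~\ref{supp}, which gives $\operatorname{supp}\mu^*=\mathbb{R}_+$; consequently $\operatorname{supp}(P_{*(i-1)}(\tilde\mu+\mu^*))=\mathbb{R}_+$, so condition \eqref{equ75} is fulfilled with $\mu,\nu$ replaced by $\tilde\mu,\mu^*$, and Theorem~\ref{thm71} yields $\|\bP\tilde\mu-\mu^*\|_{\mathcal{K}}<\|\tilde\mu-\mu^*\|_{\mathcal{K}}$ whenever $\tilde\mu\neq\mu^*$.

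To close, set $a_n:=\|\bP^n\nu-\mu^*\|_{\mathcal{K}}$. By nonexpansiveness of $\bP$ in Kantorovich--Wasserstein metric and $\bP\mu^*=\mu^*$, the sequence $(a_n)$ is nonincreasing, so $a_n\downarrow a\geq 0$. Assume $a>0$; extract a Kantorovich--Wasserstein convergent subsequence $\bP^{n_k}\nu\to\tilde\mu\in D$ as above, giving $\|\tilde\mu-\mu^*\|_{\mathcal{K}}=a$. By Lipschitz continuity of $\bP$ in Kantorovich--Wasserstein metric, $\bP^{n_k+1}\nu\to\bP\tilde\mu$ as well, so $\|\bP\tilde\mu-\mu^*\|_{\mathcal{K}}=a$, in direct contradiction with the strict inequality above. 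Hence $a=0$, which is \eqref{conv*}. The main obstacle is the passage from weak to Kantorovich--Wasserstein convergence along the extracted subsequence: without uniform integrability the weak sublimit could leak first-moment mass to infinity and slip into $\bar D\setminus D$, in which case Theorem~\ref{thm71} no longer applies and the strict contraction argument collapses.
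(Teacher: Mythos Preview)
Your proof is correct and follows the same underlying strategy as the paper: Proposition~\ref{supp} gives $\operatorname{supp}\mu^*=\mathbb{R}_+$, Theorem~\ref{unif} upgrades weak convergence with preserved first moment to Kantorovich--Wasserstein convergence, and Theorem~\ref{thm71} provides the strict contraction that pins down the limit. The only difference is organisational: the paper routes the identification of the limit through Corollary~\ref{c2} (any weak sublimit in $D$ is necessarily $\mu^*$) and then closes with the standard ``every subsequence has a further subsequence converging to the same limit'' argument, whereas you bypass Corollary~\ref{c2} and instead run the monotone--distance argument $a_n:=\|\bP^n\nu-\mu^*\|_{\mathcal K}\downarrow a$ directly, deriving a contradiction from Theorem~\ref{thm71}. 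Your route is essentially the content of the abstract Theorem~\ref{contr*} specialised to the discrete semigroup $(\bP^n)$; the paper's route packages the same idea inside Corollaries~\ref{c1}--\ref{c2}. Both are equally short once the earlier results are in place.
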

\begin{proof} As above using Proposition \ref{supp}, since  $\mu^*\in {D}$, we have that
$supp \mu^*= \mathbb{R}_+$. The first part easily follows from Corollary \ref{c2}.
 It remains to notice only that uniform integrability of $n\to \bP^n\nu$ together with weak compactness implies compactness of $n\to \bP^n\nu$ in  Kantorovich Wasserstein metric. Since any subsequence converges to the same limit in Kantorovich Wasserstein metric  we have \eqref{conv*}.
\end{proof}

\section{Asymptotic stability of the nonlinear Boltzmann-type equation }

Consider now the following equation in the space of signed measures

\begin{equation}\label{Bo1}
\frac{d\psi}{dt} + \psi = \bP\,\psi \qquad\text{for}\qquad t \geq 0
\end{equation}
with initial condition
\begin{equation}\label{Bo2}
 \psi\,(0) = \psi_{0},
\end{equation}
where $\psi_0\in \mathcal{M}_1(\mathbb{R}_{+})$ and   $ \psi : \mathbb{R}_{+} \rightarrow \mathcal{M}_{sig}(\mathbb{R}_{+})$.
In this section we show that the equation (\ref{s4.w1.11adodm2})
 may by considered in a convex closed subset $D$ of a
vector space of signed measures $\mathcal{M}_{sig}(\mathbb{R}_{+})$. This approach seems to be quite
natural and it is related to the classical results concerning the
semigroups and differential equations on convex subsets of Banach
spaces (see \cite{crandall}, \cite{20 lasota traple}). For details see Appendix.

We finish the paper with  sufficient conditions for the
asymptotic stability of solutions of the equation \eqref{Bo1} with respect to Kantorovich--Wasserstein  metric.

Equation \eqref{Bo1}  together with the initial condition \eqref{Bo2} may be considered
in a convex subset ${D}$ of the vector space of finite signed measures  $\mathcal{M}_{sig}$.

\begin{coll}\label{cola}
Assume    $\varphi_i \in \mathcal{M}_1, i\in \{1, 2, ...,\}  $ is such that  $m_1(\varphi_i)={1\over i}$, $\bP$ is given by \eqref{equ61} with $\sum\limits_{i=1}\limits^\infty \alpha_i=1$, $\alpha_i\geq 0$.  Then for every $ \psi_{0} \in D$ there exists a unique solution $\psi$ of problem \eqref{Bo1}, \eqref{Bo2} taking values in ${D}$.
\end{coll}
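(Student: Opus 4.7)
The plan is to recast the Cauchy problem \eqref{Bo1}--\eqref{Bo2} as a fixed point equation via the integrating-factor trick. Multiplying \eqref{Bo1} by $e^{t}$ and integrating yields the equivalent mild formulation
\begin{equation*}
\psi(t) = e^{-t}\psi_0 + \int_0^t e^{-(t-s)}\,\bP\psi(s)\,ds,
\end{equation*}
so I would work on the space $X_T:=C([0,T];D)$ with the uniform distance
$d_T(\psi_1,\psi_2):=\sup_{t\in[0,T]}\|\psi_1(t)-\psi_2(t)\|_{\mathcal{K}}$. By Corollary \ref{cornon}, $D$ is convex and closed in $\|\cdot\|_{\mathcal{K}}$, hence complete as a metric space, so $X_T$ is itself complete, and the map $\Phi:X_T\to X_T$ defined by the right-hand side above (with the integral understood through its action on bounded continuous test functions) is a natural candidate for a contraction.

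First I would verify $\Phi(X_T)\subset X_T$. Since $\bP$ preserves $D$ by Proposition \ref{P1} and the coefficients $e^{-t}$ and $e^{-(t-s)}\,ds$ have total mass $e^{-t}+(1-e^{-t})=1$, the measure $(\Phi\psi)(t)$ is a convex combination of elements of $D$, which lies in $D$ by convexity and closedness. In particular total mass and first moment are automatically preserved. Continuity of $t\mapsto(\Phi\psi)(t)$ in $\|\cdot\|_{\mathcal{K}}$ follows from $\|\cdot\|_{\mathcal{K}}$-continuity of $s\mapsto \bP\psi(s)$ on $D$, itself a consequence of the weak continuity of $\bP$ (Proposition \ref{P1}) together with the observation that on $D$ weak convergence plus conservation of the first moment coincides with convergence in $\|\cdot\|_{\mathcal{K}}$.

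Using the nonexpansiveness of $\bP$ in $\|\cdot\|_{\mathcal{K}}$ established in the Lemma immediately preceding Corollary \ref{cornon}, I obtain for $\psi_1,\psi_2\in X_T$
\begin{equation*}
\|(\Phi\psi_1)(t)-(\Phi\psi_2)(t)\|_{\mathcal{K}}\le \int_0^t e^{-(t-s)}\|\bP\psi_1(s)-\bP\psi_2(s)\|_{\mathcal{K}}\,ds \le (1-e^{-t})\,d_T(\psi_1,\psi_2),
\end{equation*}
so $d_T(\Phi\psi_1,\Phi\psi_2)\le (1-e^{-T})\,d_T(\psi_1,\psi_2)$ with contraction constant strictly less than $1$ for every finite $T$. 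The Banach contraction principle then produces a unique fixed point $\psi\in X_T$ for each $T>0$; by uniqueness these local solutions agree on overlapping intervals and concatenate to a unique solution on $[0,\infty)$ taking values in $D$.

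The one point that requires care, and that explains the authors' appeal to the Appendix and to \cite{crandall}, \cite{20 lasota traple}, is the precise interpretation of $\frac{d\psi}{dt}$ inside the convex subset $D\subset \mathcal{M}_{sig}$ and the verification that the mild solution just constructed is an actual solution of \eqref{Bo1}. Given the $\|\cdot\|_{\mathcal{K}}$-continuity of $s\mapsto \bP\psi(s)$, this is the standard Lebesgue-differentiation argument for Bochner-type integrals adapted to the convex-subset-of-a-Banach-space framework, and is the only nontrivial technical ingredient beyond the contraction argument sketched above.
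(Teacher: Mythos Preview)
Your argument is correct and is essentially the same as the paper's: the paper does not give an explicit proof of Corollary~\ref{cola} but simply invokes Theorem~\ref{sec4.twierdzenie1.3} from the Appendix (the Crandall-type result on differential equations on closed convex subsets of Banach spaces), whose proof sketch is precisely the mild formulation \eqref{s4.1a18} solved by successive approximations, with the required Lipschitz constant $l=1$ supplied by the nonexpansiveness lemma and the invariance and closedness of $D$ supplied by Proposition~\ref{P1} and Corollary~\ref{cornon}. Your write-up makes this explicit by computing the contraction constant $1-e^{-T}$ on $C([0,T];D)$ and then concatenating, which is exactly the content of the Appendix argument specialized to the present setting.
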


The solutions of \eqref{Bo1}, \eqref{Bo2}  generate a semigroup of Markov operators $(P^{t})_{t\geq 0}$
on D given by
\begin{equation}\label{s4.1a19}
    {P}^{t}\,u_{0} = u(t) \qquad\text{for}\qquad t\in {\mathbb R}_{+},\qquad u_{0}\in {D}.
\end{equation}

Now using Theorem \ref{thm71} we can easily derive the following result
\begin{thm}\label{thm72}
Let $\bP$ be operator given by \eqref{equ61}. Moreover, let
$(\varphi_1,\varphi_2,...)$ be a sequence of probability measures such that $m_1(\varphi)={1\over i}$ and   $\alpha_i\geq 0$ be a sequence of nonnegative numbers such that  $\sum\limits_{i=1}\limits^\infty \alpha_i=1$. Assume that $0$ is an accumulation point of $supp\varphi_i$ for some $i\in \Lambda$.
 If $\bP$ has a fixed point $\mu^* \in D $ then
\begin{equation}\label{s4.1a20}
\lim_{t\to\infty} ||\psi (t)- \mu^*||_{\mathcal{K}} = 0
\end{equation}
for every sequentially compact (in Kantorovich Wasserstein metric) solution $ \psi $ of  \eqref{Bo1}, \eqref{Bo2}.
\end{thm}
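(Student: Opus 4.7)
The plan is to turn $\mu^*$ into a Lyapunov target for the semigroup $(P^t)_{t\geq 0}$ generated by \eqref{Bo1}-\eqref{Bo2} and to kill every nontrivial cluster point by means of the strict contraction in Theorem \ref{thm71}. First I would observe that since $\bP\mu^*=\mu^*$, the constant curve $t\mapsto\mu^*$ solves \eqref{Bo1}, so $P^t\mu^*=\mu^*$ for every $t\geq 0$. From the mild formulation
$$
\psi(t)-\mu^* = e^{-t}(\psi_0-\mu^*)+\int_0^t e^{-(t-s)}\bigl(\bP\psi(s)-\bP\mu^*\bigr)\,ds,
$$
combined with the nonexpansiveness of $\bP$ in $\|\cdot\|_{\mathcal{K}}$ proven in Section 2 and a Gronwall estimate, the whole semigroup $P^t$ is nonexpansive in Kantorovich-Wasserstein metric. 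In particular $V(t):=\|\psi(t)-\mu^*\|_{\mathcal{K}}$ is non-increasing, with some limit $\beta\geq 0$.

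Using the sequential compactness hypothesis I would extract $t_n\to\infty$ with $\psi(t_n)\to\mu$ in $\|\cdot\|_{\mathcal{K}}$; Corollary \ref{cornon} ensures $\mu\in D$, and continuity of $V$ forces $\|\mu-\mu^*\|_{\mathcal{K}}=\beta$. For every fixed $T\geq 0$, continuity (nonexpansiveness) of $P^T$ gives $\psi(t_n+T)=P^T\psi(t_n)\to P^T\mu$ in $\|\cdot\|_{\mathcal{K}}$, while $V(t_n+T)\to\beta$; hence $\|P^T\mu-\mu^*\|_{\mathcal{K}}=\beta$ for every $T\geq 0$.

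Next I would verify the hypotheses of Theorem \ref{thm71} for the pair $(P^s\mu,\mu^*)$. Because $\mu^*\in D$, Proposition \ref{supp} yields $\mathrm{supp}\,\mu^*=\mathbb{R}_+$, so $\mathrm{supp}(P_{*(i-1)}(P^s\mu+\mu^*))=\mathbb{R}_+$ automatically. Assume for contradiction that $\beta>0$, hence $\mu\neq\mu^*$. If $P^{s_0}\mu=\mu^*$ for some $s_0\geq 0$ then $P^T\mu=\mu^*$ for every $T\geq s_0$, contradicting $\|P^T\mu-\mu^*\|_{\mathcal{K}}=\beta>0$; thus $P^s\mu\neq\mu^*$ for every $s\geq 0$. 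Theorem \ref{thm71} then gives
$$
\|\bP P^s\mu-\mu^*\|_{\mathcal{K}} = \|\bP P^s\mu-\bP\mu^*\|_{\mathcal{K}} < \|P^s\mu-\mu^*\|_{\mathcal{K}} \leq \beta
\qquad \text{for every } s\geq 0.
$$
Since $s\mapsto P^s\mu$ is $\|\cdot\|_{\mathcal{K}}$-continuous and $\bP$ is $\|\cdot\|_{\mathcal{K}}$-continuous (by nonexpansiveness), the function $s\mapsto\|\bP P^s\mu-\mu^*\|_{\mathcal{K}}$ is continuous on the compact interval $[0,T]$ and therefore attains a maximum equal to $\beta-\delta$ for some $\delta>0$. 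Plugging this bound into the mild equation for $P^T\mu-\mu^*$ gives
$$
\|P^T\mu-\mu^*\|_{\mathcal{K}}\leq e^{-T}\beta+(1-e^{-T})(\beta-\delta)=\beta-(1-e^{-T})\delta<\beta,
$$
contradicting $\|P^T\mu-\mu^*\|_{\mathcal{K}}=\beta$. Hence $\beta=0$, so every $\|\cdot\|_{\mathcal{K}}$-cluster point of the orbit equals $\mu^*$; sequential compactness then forces $\|\psi(t)-\mu^*\|_{\mathcal{K}}\to 0$ as $t\to\infty$.

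The main technical point is the third step: upgrading the pointwise strict inequality of Theorem \ref{thm71} to a strict gap inside the integral in the mild formula. This is achieved by ruling out $P^s\mu=\mu^*$ for every $s\geq 0$, and then using continuity of $s\mapsto\|\bP P^s\mu-\mu^*\|_{\mathcal{K}}$ on the compact parameter range $[0,T]$. The remaining steps are standard Lyapunov/Gronwall bookkeeping based on the nonexpansiveness of $\bP$ and the closedness of $D$ in Kantorovich-Wasserstein metric.
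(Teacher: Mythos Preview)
Your proof is correct and follows essentially the same strategy as the paper: establish nonexpansiveness of $(P^t)_{t\geq 0}$ via the mild formulation and Gronwall, note that $\mu^*$ is stationary, and then use Theorem~\ref{thm71} (together with $\mathrm{supp}\,\mu^*=\mathbb{R}_+$ from Proposition~\ref{supp}) to produce a strict contraction towards $\mu^*$.

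The only structural difference is in how the endgame is organized. The paper invokes the abstract Theorem~\ref{contr*} from the Appendix (a nonexpansive semigroup with a point $x^*$ satisfying the strict contraction condition~\eqref{strict} attracts every sequentially compact orbit) and simply checks its hypothesis by plugging Theorem~\ref{thm71} into the mild formula~\eqref{s4.1a18}. You instead unfold that abstract argument by hand: extract a cluster point $\mu$, show $\|P^T\mu-\mu^*\|_{\mathcal{K}}=\beta$ for every $T$, rule out $P^s\mu=\mu^*$, and then use continuity on a compact interval to upgrade the pointwise strict inequality of Theorem~\ref{thm71} to a uniform gap $\delta>0$ inside the integral. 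This last step is a welcome clarification, since the paper's inequality~\eqref{rown} leaves the passage from ``strictly less at each $s$'' to ``strictly less after integration'' implicit. Both routes are equivalent; yours is self-contained, while the paper's isolates the dynamical-systems part as a reusable lemma.
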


\begin{proof}
First we have to prove that $(P^{t})_{t\geq 0}$ is
nonexpansive on $D$ with respect to Kantorovich-Wasserstein  metric. For this purpose let $\eta_{0}, \vartheta_{0} \in D$ be given. For $t\in {\mathbb R}_{+}$ define
\begin{equation}
    \upsilon(t) = P^{t}\,\eta_{0} - P^{t}\,\vartheta_{0}.
\end{equation}

Using  \eqref{s4.1a18}, Corollary \ref{cornon}  and \eqref{s4.1a19}  it is easy to see that
\begin{equation}
v(t)=  e^{-t} v_0 + \int\limits_{0}\limits^{t} e^{-(t-s)} (\bP(P^s \eta _0) - \bP(P^s \vartheta_0) ) ds \quad\textnormal{for} \quad t\in \mathbb{R}_+.
\end{equation}
From Corollary \ref{cola}  it follows immediately that
\begin{equation}
||v(t)||_{\mathcal{K}}  \leq  e^{-t} ||v(0)||_{\mathcal{K}} + \int\limits_{0}\limits^{t} e^{-(t-s)} ||v(s)||_{\mathcal{K}} ds \quad\textnormal{for} \quad t\in \mathbb{R}_+.
\end{equation}
Consequently
\begin{equation}
f(t) \leq  ||v(0)||_{\mathcal{K}} + \int\limits_{0}\limits^{t} f(s) ds \quad\textnormal{for} \quad t\in \mathbb{R}_+,
\end{equation}
where $f(t)=e^t  ||v(t)||_{\mathcal{K}} $.
From the Gronwall inequality it follows that
\begin{equation}
f(t) \leq e^{t} ||v(0)||_{\mathcal{K}}
\end{equation}
This is equivalent to the fact that $(P^t)_{\geq t}$ is nonexpansive on $D$ with respect to Kantorovich - Wasserstein metric.
Notice that $\mu^*$ as a fixed point of $\bP$ is a stationary solution to the equation \eqref{Bo1} i.e. $P^t \mu^*=\mu^*$.
To complete the proof it is sufficient to verify condition \eqref{strict} of Theorem \ref{contr*}.

From (\ref{s4.1a18}) and Proposition \ref{supp} and  Theorem \ref{thm71} it follows immediately that for $\psi_{0}\in
D$ and  $t > 0$
\begin{eqnarray}\label{rown}
&&\|P^{t}\psi_{0} - \mu^*\|_{\mathcal{K}} \leq
e^{-t}\,\|\,\psi_{0} - \mu^*\|_{\mathcal{K}}
+ \int\limits_{0}\limits^{t}\,e^{-(t-s)}\,\|\bP P^s\psi_{0} - \bP\mu^*\|_{\mathcal{K}}\, ds < \nonumber \\
&&e^{-t}\,\|\,\psi_{0} - \mu^*\|_{\mathcal{K}} + (1 - e^{-t})\,\|P^{s}\psi_{0} - \mu^*\|_{\mathcal{K}}\leq \|\,\psi_{0}
- \mu^*\|_{\mathcal{K}}.
\end{eqnarray}

By Theorem \ref{contr*} we immediately obtain \eqref{s4.1a20}.
\end{proof}

We shall now study nonlinear Boltzmann equation \eqref{Bo1} using Zolotariev seminorm following the results of \cite{20 lasota traple}.
Consider time discretized version of \eqref{Bo1} with discretization step $h\in (0,1)$
\begin{equation}\label{Bo3}
\frac{d\psi_{h}}{dt}(d_h(t)) + \psi_{h}(d_h(t)) = \bP\,\psi_{h} (d_h(t)) \qquad\text{for}\qquad t \geq 0
\end{equation}
with initial condition
\begin{equation}\label{Bo4}
 \psi_h\,(0) = \psi_{0},
\end{equation}
where $\psi_0\in \mathcal{M}_1(\mathbb{R}_{+})$ and $d_h(t)=nh$ for $t\in [nh,(n+1)h)$.
Then
\begin{equation}
\psi_h((n+1)h)=(1-h)\psi_h(nh)+h \bP\psi_h(nh):=\bP_h\psi_h(nh).
\end{equation}
Notice that fixed point of the operator $\bP$ is also a fixed point of $\bP_h$ and vice versa.
We have
\begin{lem} Under assumptions of Theorem \ref{fixedpoint} we have that
\begin{itemize}
\item[(i)] {when $m_r(\mu)<\infty$ then $m_r(\bP_h^n \mu)<\infty$ for any positive integer $n$ and $\mu\in \mathcal{M}_{1}(\mathbb R_{+})$},
\item[(ii)] {$\|\bP_h \mu - \bP_h \nu\|_r\leq \lambda_h \|mu-\nu\|_r$ for $\mu,\nu\in \mathcal{M}_{1}(\mathbb R_{+})$ such that $m_r(\mu)<\infty$, $m_r(\nu)<\infty$ with $\lambda_h=1-h(1-\lambda)$, where $\lambda=\sum\limits_{i=1}\limits^\infty \alpha_i m_r(\varphi_i) i$},
    \item[(iii)] {$\|\bP_h^n\mu-\mu^*\|_{\mathcal{K}} \leq 2^{1+{1\over r}}\lambda_h^{{n\over r}} \left(\|\mu-\mu^*\|_r\right)^{1\over r}\leq K $ with $\mu^*\in D$ being the unique fixed point of $\bP$ and $K={1\over r} 2^{1+{1\over r}}(m_r(\mu)+m_r(\mu^*))$.}
        \end{itemize}
\end{lem}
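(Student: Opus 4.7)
The plan is to handle the three items in order, since each builds on the previous one, and the backbone of the argument is that $\bP_h=(1-h)I+h\bP$ is an explicit convex combination, so estimates for $\bP$ transfer cleanly to $\bP_h$ with the contraction factor pushed closer to one.

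For part (i), I would use Proposition \ref{P1}: under \eqref{ass1} and $m_r(\mu)<\infty$, the inequality \eqref{rmom} gives $m_r(\bP\mu)\le\bigl(\sum_i\alpha_i m_r(\varphi_i)i^r\bigr)m_r(\mu)<\infty$. Writing $\bP_h\mu=(1-h)\mu+h\bP\mu$, a direct computation against $x^r$ yields $m_r(\bP_h\mu)\le(1-h)m_r(\mu)+h\sum_i\alpha_i m_r(\varphi_i)i^r\, m_r(\mu)<\infty$. An immediate induction on $n$ delivers $m_r(\bP_h^n\mu)<\infty$.

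For part (ii), the Zolotariev seminorm $\|\cdot\|_r$ is subadditive and positively homogeneous on $\mathcal{M}_{sig}$, so using $\bP_h\mu-\bP_h\nu=(1-h)(\mu-\nu)+h(\bP\mu-\bP\nu)$ together with the contraction estimate \eqref{prop4} from the preceding lemma gives
\begin{equation*}
\|\bP_h\mu-\bP_h\nu\|_r\le(1-h)\|\mu-\nu\|_r+h\lambda\|\mu-\nu\|_r=(1-h(1-\lambda))\|\mu-\nu\|_r=\lambda_h\|\mu-\nu\|_r,
\end{equation*}
as required. Note $\lambda<1$ by the hypothesis $m_r(\varphi_i)<1/i$ in Theorem \ref{fixedpoint}, so $\lambda_h\in(0,1)$.

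For part (iii), since $\mu^*$ is a fixed point of $\bP$ it is also a fixed point of $\bP_h$, so iterating (ii) gives
\begin{equation*}
\|\bP_h^n\mu-\mu^*\|_r=\|\bP_h^n\mu-\bP_h^n\mu^*\|_r\le\lambda_h^{\,n}\|\mu-\mu^*\|_r.
\end{equation*}
To convert this bound in $\|\cdot\|_r$ into a bound in Kantorovich--Wasserstein metric, I would invoke Theorem \ref{Rio} from the appendix, which is the Rio-type interpolation inequality between $\|\cdot\|_{\mathcal K}$ and $\|\cdot\|_r$ in terms of a moment control. Part (i), applied with the iterated measure, provides the required uniform moment bound (the second inequality in \eqref{prop1} gives $\|\mu-\mu^*\|_r\le\frac{1}{r}(m_r(\mu)+m_r(\mu^*))$, which also controls the moments of $\bP_h^n\mu$ since $m_r(\bP_h^n\mu)\le m_r(\mu^*)+r\|\bP_h^n\mu-\mu^*\|_r\le m_r(\mu^*)+r\|\mu-\mu^*\|_r$). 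Plugging these pieces into Theorem \ref{Rio} yields the first inequality of (iii) with the constant $2^{1+1/r}\lambda_h^{n/r}(\|\mu-\mu^*\|_r)^{1/r}$, and the final uniform bound by $K$ follows by combining $\lambda_h^{n/r}\le 1$ with \eqref{prop1}.

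The main obstacle is matching the precise constant $2^{1+1/r}$ in (iii): this depends on the exact statement of the Rio-type inequality in the appendix (Theorem \ref{Rio}) and on being careful with the split $|m_r|(\mu-\mu^*)\le m_r(\mu)+m_r(\mu^*)$. The rest of the argument is formal, using only that $\bP_h$ is an explicit convex combination and that $\|\cdot\|_r$ is a seminorm.
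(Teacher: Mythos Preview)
Your proposal is correct and follows essentially the same route as the paper: Proposition~\ref{P1} plus induction for (i), the convex-combination decomposition together with \eqref{prop4} for (ii), and iterating (ii) followed by Theorem~\ref{Rio} and \eqref{prop1} for (iii). Your worry about the constant is unfounded: Theorem~\ref{Rio} as stated gives $\|\mu-\nu\|_{\mathcal K}\le 2(2\|\mu-\nu\|_r)^{1/r}=2^{1+1/r}\|\mu-\nu\|_r^{1/r}$ directly, so no separate moment-tracking is needed beyond ensuring the hypotheses $m_r(\bP_h^n\mu),m_r(\mu^*)<\infty$ hold, which (i) already provides.
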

\begin{proof}
Under \eqref{ass1} using Proposition \ref{P1} we have that $m_r(\bP\mu)<\infty$ and therefore also $m_r(\bP_h \mu)<\infty$. Then (i) follows by induction. (ii) follows directly from the definition of $\bP$ and \eqref{prop4}. For fixed point $\mu^*$ of $\bP$, which is also  a fixed point of $\bP_h$ and $\mu\in \mathcal{M}_{1}(\mathbb R_{+})$ such that $m_r(\mu)<\infty$ we have
\begin{equation}
\|\bP_h^n \mu\|_r \leq \lambda_h^n\|\mu-\nu\|_r.
\end{equation}
Therefore using Theorem \ref{Rio}  and then \eqref{prop1} we obtain
\begin{equation}
\|\bP_h^n \mu - \mu^*\|_{\mathcal{K}}\leq 2^{1+{1\over r}} \lambda_h^{n\over r} (\|\mu-\mu^*\|_{\mathcal{K}})^{1\over r}\leq K
\end{equation}
with $K={1\over r} 2^{1+{1\over r}}(m_r(\mu)+m_r(\mu^*))$.
\end{proof}
We recall now Lemma 3 of \cite{20 lasota traple}.
\begin{lem}\label{Las1}
Under the assumptions of Theorem \ref{fixedpoint}, when $\mu=\psi_0$ is such that $m_r(\mu)<\infty$, we have
\begin{equation}
\|\psi_h(t)-\psi(t)\|_{\mathcal{K}}\leq 4K h (e^{2t}-1).
\end{equation}
\end{lem}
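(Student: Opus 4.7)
The plan is to derive an integral inequality for $u(t):=\|\psi(t)-\psi_h(t)\|_{\mathcal{K}}$ and then close it via Gronwall's inequality, using the a priori bound from part (iii) of the preceding lemma to keep the frozen forcing term under control.

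First I would pass to the Duhamel (integrated) form of both solutions. Differentiating \eqref{Bo1} shows that $\psi(t)=\psi_0+\int_0^t(\bP\psi(s)-\psi(s))\,ds$, while the piecewise linear interpolant produced by the Euler step \eqref{Bo3}--\eqref{Bo4} satisfies $\psi_h(t)=\psi_0+\int_0^t(\bP\psi_h(d_h(s))-\psi_h(d_h(s)))\,ds$. Subtracting, taking the Kantorovich--Wasserstein norm, and using the nonexpansiveness of $\bP$ on $\bar D$ established earlier, I would get
\begin{equation*}
u(t)\leq 2\int_0^t\|\psi(s)-\psi_h(d_h(s))\|_{\mathcal{K}}\,ds\leq 2\int_0^t u(s)\,ds+2\int_0^t\|\psi_h(s)-\psi_h(d_h(s))\|_{\mathcal{K}}\,ds.
\end{equation*}

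Next I would estimate the discretization jump $\|\psi_h(s)-\psi_h(d_h(s))\|_{\mathcal{K}}$. For $s\in[nh,(n+1)h)$, piecewise linearity gives $\psi_h(s)-\psi_h(nh)=(s-nh)(\bP\psi_h(nh)-\psi_h(nh))$, so the norm is at most $h\,\|\bP\psi_h(nh)-\psi_h(nh)\|_{\mathcal{K}}$. Part (iii) of the preceding lemma yields the uniform bound $\|\bP_h^n\mu-\mu^*\|_{\mathcal{K}}\leq K$, and nonexpansiveness of $\bP$ (together with the fixed-point identity $\bP\mu^*=\mu^*$) gives $\|\bP\psi_h(nh)-\mu^*\|_{\mathcal{K}}\leq K$ as well. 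A triangle inequality then produces $\|\bP\psi_h(nh)-\psi_h(nh)\|_{\mathcal{K}}\leq 2K$, whence $\|\psi_h(s)-\psi_h(d_h(s))\|_{\mathcal{K}}\leq 2Kh$ uniformly in $s$.

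Combining the two estimates yields the Gronwall-type inequality $u(t)\leq 4Kht+2\int_0^t u(s)\,ds$. Setting $v(t)=\int_0^t u(s)\,ds$ reduces this to the linear ODE $v'(t)\leq 2v(t)+4Kht$, which I would solve by the integrating factor $e^{-2t}$ to obtain $v(t)\leq Kh(e^{2t}-2t-1)$ and therefore $u(t)\leq 2Kh(e^{2t}-1)$, which is dominated by the claimed bound $4Kh(e^{2t}-1)$.

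The main obstacle is the a priori control of the iterates $\psi_h(nh)$ that is needed to make the one-step jump $\|\bP\psi_h(nh)-\psi_h(nh)\|_{\mathcal{K}}$ harmless; this is precisely where part (iii) of the preceding lemma, and through it the Zolotariev-seminorm contraction $\|\bP_h\mu-\bP_h\nu\|_r\leq\lambda_h\|\mu-\nu\|_r$ together with the existence of a fixed point $\mu^*\in D$ with finite $r$-th moment (Theorem \ref{fixedpoint}), enters the argument. Without such a uniform estimate the frozen forcing $\bP\psi_h(d_h(s))$ could in principle drift off and prevent Gronwall from closing, so this a priori bound is the conceptual heart of the proof.
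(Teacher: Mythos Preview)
The paper does not supply a proof of this lemma at all: it is stated with the remark ``We recall now Lemma 3 of \cite{20 lasota traple}'' and then immediately used. Your argument is therefore not being compared against an in-paper proof but against a cited result, and your outline is a correct reconstruction of the standard proof (and is in fact essentially how Lasota--Traple argue). The Duhamel subtraction, the nonexpansiveness of $\bP$ in $\|\cdot\|_{\mathcal K}$, the piecewise-linear one-step estimate $\|\psi_h(s)-\psi_h(d_h(s))\|_{\mathcal K}\le 2Kh$ coming from part~(iii) of the preceding lemma, and the Gronwall closure are all sound; your sharper bound $2Kh(e^{2t}-1)$ is consistent with the stated $4Kh(e^{2t}-1)$. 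One cosmetic slip: you write ``Differentiating \eqref{Bo1}'' where you mean ``Integrating''.
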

We can now formulate
\begin{thm}
Under assumptions of Theorem \ref{fixedpoint} when $\mu=\psi_0$ is such that $m_r(\mu)<\infty$ we have that
\begin{equation}
\|\psi(t)-\mu^*\|_{\mathcal{K}}\leq K e^{-{t\over r}(1-\lambda)}.
\end{equation}
\end{thm}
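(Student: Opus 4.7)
The plan is to approximate the continuous evolution by the discrete scheme $\bP_h$ and transfer the geometric decay estimate in the Zolotariev seminorm to exponential decay in the Kantorovich--Wasserstein metric, then let the step $h\downarrow 0$.

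First I would fix $t>0$, $h\in(0,1)$ with $t=nh$ for some positive integer $n$ (at the end we relax this by taking $h\to 0$ along a suitable sequence). Applying part (iii) of the preceding lemma to the iterate $\bP_h^n\psi_0$ and using $\lambda_h=1-h(1-\lambda)\leq e^{-h(1-\lambda)}$, I obtain
\begin{equation*}
\|\psi_h(t)-\mu^*\|_{\mathcal{K}}\;=\;\|\bP_h^n\psi_0-\mu^*\|_{\mathcal{K}}\;\leq\; K\,\lambda_h^{n/r}\;\leq\; K\,e^{-(nh)(1-\lambda)/r}\;=\;K\,e^{-t(1-\lambda)/r}.
\end{equation*}
This already gives the desired bound, but only for the discretized solution.

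Next I would invoke Lemma \ref{Las1} to control the discretization error, writing
\begin{equation*}
\|\psi(t)-\mu^*\|_{\mathcal{K}}\;\leq\;\|\psi(t)-\psi_h(t)\|_{\mathcal{K}}+\|\psi_h(t)-\mu^*\|_{\mathcal{K}}\;\leq\;4Kh(e^{2t}-1)+K\,e^{-t(1-\lambda)/r}.
\end{equation*}
Since the first term tends to $0$ as $h\downarrow 0$ while the second is independent of $h$, letting $h\to 0$ (along the sequence $h_k=t/k$, say, so that $t=kh_k$) yields the claimed inequality.

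The only mildly delicate point is making sure the approximation argument is legal: one must check that $\psi(t)$ is well defined in $D$ with $m_r(\psi(t))<\infty$ (so that the Zolotariev seminorm makes sense along the trajectory), and that Lemma \ref{Las1} applies uniformly on $[0,t]$. Both are consequences of $m_r(\psi_0)<\infty$ together with Proposition \ref{P1} and the representation \eqref{s4.1a18} of $\psi$ via the variation of constants formula, using nonexpansiveness of $\bP$ in the Kantorovich--Wasserstein metric. Apart from this bookkeeping, the proof is essentially a two-line combination of the discrete geometric decay and the $O(h)$ discretization error.
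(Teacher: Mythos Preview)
Your proposal is correct and follows essentially the same approach as the paper: fix $t$, take $h=t/n$, split $\|\psi(t)-\mu^*\|_{\mathcal K}$ via the triangle inequality into the discretization error controlled by Lemma~\ref{Las1} and the discrete term bounded by $K\lambda_h^{n/r}\le K e^{-t(1-\lambda)/r}$ using $1-x\le e^{-x}$, then send $h\downarrow 0$ (the paper phrases this last step as an $\varepsilon$-argument, but the content is identical).
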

\begin{proof}
We follow the arguments of the proof of Theorem 1 of \cite{20 lasota traple}.
Fix $t>0$ and for a given $\varepsilon>0$ find positive integer $n$ such that ${t\over n}4K(e^{2t}-1)\leq \varepsilon$
Then by Lemma \ref{Las1}
\begin{equation}\label{Las2}
\|\psi(t)-\mu^*\|_{\mathcal{K}}\leq \|\psi(t)-\psi_h(t)\|_{\mathcal{K}} + \|\psi_h(t)-\mu^*\|_{\mathcal{K}}\leq \varepsilon + K(1-{t\over n}(1-\lambda))^{n\over r}.
\end{equation}
Since $1-x\leq e^{-x}$ for $x\geq 0$ we have that $(1-{t\over n}(1-\lambda))^{n\over r}\leq e^{-{t\over r}(1-\lambda)}$ and the claim follows from \eqref{Las2} taking into account that $\varepsilon$ could be chosen arbitrarily small.
\end{proof}

\section{Appendix}
On a given complete metric space $(E,\rho)$ consider a continuous operator $T$ or continuous semigroup $(T_t)$, for $t\geq 0$ transforming $(E,\rho)$ into itself. Denote by $\omega(x)$ the set of all limiting points of the trajectory $n\to T^nx$ or $t\to T_tx$ respectively.   We say that $n\to T^nx$ or $t\to T_tx$ is sequentially compact if from every sequence $T^{n_k}x$, $T_{t_{n_k}}x$ respectively, one could choose a convergent subsequence. Let ${\mathcal Z}$ be the set of all $x$ such that the trajectory $t \to T_tx$ ($n\to T^nx$) is sequentially compact. We shall assume that
${\mathcal Z}$ is a nonempty set and let
$\Omega=\bigcup\limits_{\mu\in {\mathcal Z}} \omega(\mu)$.
We have the following result formulated for semigroup $T_t$, which naturally holds for continuous operator $T$

\begin{thm} \label{contr*}(see Theorem 5.1.2 of \cite{7}) Assume that $T_t$ is nonexpansive i.e.
\begin{equation}
\rho(T_tx,T_ty)\leq \rho(x,y)
\end{equation}
for $t\geq 0$ and there is $x^*\in \Omega$ such that for every $x\in \Omega$, $x\neq x^*$ there is $t(x)$ such that
\begin{equation}\label{strict}
\rho(T_{t(x)}x,T_{t(x)}x^*)<\rho(x,x^*).
\end{equation}
Then for $z\in {\mathcal Z}$ we have
\begin{equation}
\lim_{t\to \infty}\rho(T_tz,x^*)=0.
\end{equation}
\end{thm}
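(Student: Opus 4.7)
The plan is to analyze the $\omega$-limit set $\omega(z)$ of the trajectory starting at $z \in \mathcal{Z}$. Three ingredients drive the argument: a preliminary step showing that $x^*$ must be a fixed point of every $T_t$, the monotonicity of $\rho(T_t z, x^*)$ that this induces via nonexpansiveness, and the strict contraction hypothesis on $\Omega \setminus \{x^*\}$ that forces the resulting monotone limit to vanish.

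First I would establish that $T_t x^* = x^*$ for every $t \geq 0$. Since $x^* \in \Omega$ there is $\mu \in \mathcal{Z}$ with $x^* \in \omega(\mu)$; the set $\omega(\mu)$ is compact and $T_t$-invariant, so the whole orbit of $x^*$ lies in $\omega(\mu)$ and in particular $x^* \in \mathcal{Z}$. If some $T_s x^* \neq x^*$ existed, then $T_s x^* \in \Omega \setminus \{x^*\}$ would trigger the strict contraction hypothesis, yielding $\rho(T_{t(T_s x^*)+s} x^*, T_{t(T_s x^*)} x^*) < \rho(T_s x^*, x^*)$. Combining this with the fact that $t \mapsto \rho(T_{t+s} x^*, T_t x^*)$ is nonincreasing (nonexpansiveness) and that $x^*$ can be recovered as a limit along subsequences of its own orbit (compactness of $\omega(\mu)$) should force a contradiction. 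I expect this fixed-point step to be the main technical obstacle, since the hypothesis is phrased only in terms of the pair $(x, x^*)$ and does not immediately yield $T_t$-invariance of $x^*$.

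Once $T_t x^* = x^*$, nonexpansiveness gives $\rho(T_{t+s} z, x^*) = \rho(T_{t+s} z, T_{t+s} x^*) \leq \rho(T_t z, T_t x^*) = \rho(T_t z, x^*)$, so $t \mapsto \rho(T_t z, x^*)$ is nonincreasing and tends to some $\alpha \geq 0$. Because $z \in \mathcal{Z}$, $\omega(z)$ is nonempty, compact, and $T_t$-invariant (standard continuity arguments). For any $w \in \omega(z)$, choose $t_n \to \infty$ with $T_{t_n} z \to w$; continuity of $\rho(\cdot, x^*)$ combined with the monotone convergence to $\alpha$ forces $\rho(w, x^*) = \alpha$ uniformly over $w \in \omega(z)$.

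To finish, suppose for contradiction $\alpha > 0$. Pick any $w \in \omega(z) \subseteq \Omega$; then $w \neq x^*$, so the strict contraction assumption furnishes $t(w)$ with $\rho(T_{t(w)} w, T_{t(w)} x^*) < \rho(w, x^*) = \alpha$. Using $T_{t(w)} x^* = x^*$ this reads $\rho(T_{t(w)} w, x^*) < \alpha$. But $T_{t(w)} w$ again lies in $\omega(z)$ by invariance, whence the previous paragraph forces $\rho(T_{t(w)} w, x^*) = \alpha$, a contradiction. Therefore $\alpha = 0$, proving $\lim_{t \to \infty} \rho(T_t z, x^*) = 0$.
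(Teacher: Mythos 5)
The paper does not actually prove this statement --- it is quoted from Theorem 5.1.2 of \cite{7} and used as a black box --- so there is no in-text proof to compare against; I am judging your argument on its own. Your overall architecture is the standard one, and your second and third steps are complete and correct: once $T_tx^*=x^*$ is known, $t\mapsto\rho(T_tz,x^*)$ is nonincreasing with limit $\alpha$, every $w\in\omega(z)$ satisfies $\rho(w,x^*)=\alpha$, and forward invariance of $\omega(z)\subset\Omega$ together with the strict-contraction hypothesis kills the case $\alpha>0$.

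The gap is exactly where you predicted it, in the fixed-point step, and specifically in the claim that $x^*$ ``can be recovered as a limit along subsequences of its own orbit''. You attribute this to compactness of $\omega(\mu)$, but compactness of an $\omega$-limit set does not by itself make its individual points recurrent (for general dynamics an $\omega$-limit set may contain non-recurrent points). What rescues the claim is nonexpansiveness. Choose $t_n\to\infty$ with $T_{t_n}\mu\to x^*$; for $m>n$ the triangle inequality through $T_{t_m}\mu=T_{t_m-t_n}(T_{t_n}\mu)$ and nonexpansiveness give
\begin{equation*}
\rho\bigl(T_{t_m-t_n}x^*,\,x^*\bigr)\;\le\;\rho\bigl(x^*,\,T_{t_n}\mu\bigr)+\rho\bigl(T_{t_m}\mu,\,x^*\bigr),
\end{equation*}
so a diagonal choice $\sigma_k=t_{m_k}-t_{n_k}\to\infty$ yields $T_{\sigma_k}x^*\to x^*$, i.e.\ $x^*\in\omega(x^*)$. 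With this lemma in hand your own computation closes the step: $g(t)=\rho(T_{t+s}x^*,T_tx^*)$ is nonincreasing, $g(\sigma_k)=\rho(T_s(T_{\sigma_k}x^*),T_{\sigma_k}x^*)\to g(0)$ by continuity of $T_s$ and of $\rho$, so $g$ can never strictly decrease; this contradicts the inequality obtained by applying the hypothesis to $T_sx^*\in\omega(\mu)\subset\Omega$ whenever $T_sx^*\neq x^*$. (Equivalently: $L(s):=\lim_{t\to\infty}\rho(T_{t+s}\mu,T_t\mu)$ exists and equals $\rho(T_sy,y)$ for every $y\in\omega(\mu)$, so this quantity is constant on $\omega(\mu)$ and no strict decrease is possible there.) Supply that recurrence argument and the proof is complete.
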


\begin{defin}\label{uninte}
Sequence of probability measures $\mu_n$ defined on $\mathbb{R}_+$ is uniformly integrable when
\begin{equation}\label{unint1}
\sup_n \int\limits_M\limits^\infty x \mu_n(dx) \to 0,
\end{equation}
whenever $M\to \infty$.
\end{defin}
\begin{thm}\label{unif} Assume that for sequence of probability measures $\mu_n$ defined on $\mathbb{R}_+$ we have that $m_1(\mu_n)<\infty$ and $\mu_n\Rightarrow \mu$, as $n\to \infty$. Then $m_1(\mu_n)\to m_1(\mu)$ if and only if measures $\mu_n$ are uniformly integrable. Furthermore for sequence of probability measures $\mu_n$ defined on $\mathbb{R}_+$ such that $m_1(\mu_n)<\infty$ and we have that convergence $\mu_n\Rightarrow \mu$, as $n\to \infty$, together with convergence of $m_1(\mu_n)\to m_1(\mu)$ is equivalent to convergence $\|\mu_n-\mu\|_{\mathcal{K}}\to 0$.
\end{thm}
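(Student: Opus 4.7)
The plan is to prove the two claims in turn, with the first used as an input to the second.

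For the first claim (uniform integrability characterizes convergence of first moments under weak convergence), I would use the bounded continuous cutoff $\phi_M(x) := x \wedge M$. In the ``moment convergence $\Rightarrow$ uniform integrability'' direction, weak convergence gives $\int \phi_M\, d\mu_n \to \int \phi_M\, d\mu$ for each $M$, so combined with $m_1(\mu_n)\to m_1(\mu)$ we get $\int (x-M)_+\, d\mu_n \to \int (x-M)_+\, d\mu$ as $n\to\infty$. Dominated convergence (since $m_1(\mu)<\infty$) makes $\int (x-M)_+\, d\mu$ arbitrarily small for large $M$, and the doubling bound $\int_{2M}^\infty x\, d\mu_n \le 2\int (x-M)_+\, d\mu_n$ then produces the uniform tail estimate in \eqref{unint1}; finitely many indices are absorbed by taking $M$ larger. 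Conversely, uniform integrability gives $\sup_n m_1(\mu_n)<\infty$ hence $m_1(\mu)<\infty$ by Fatou, and the triangle decomposition $m_1(\mu_n)-m_1(\mu)=\int(x-M)_+\,d\mu_n-\int(x-M)_+\,d\mu+\int\phi_M\,d(\mu_n-\mu)$ drives the difference to zero once $M$ is chosen so both tails are small and $n$ is large enough.

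For the second claim, the easy direction follows at once: the identity $f(x)=x$ lies in $\mathcal{K}$, so $|m_1(\mu_n)-m_1(\mu)|\le \|\mu_n-\mu\|_{\mathcal{K}}$, while $\|\cdot\|_{\mathcal{F}}\le \|\cdot\|_{\mathcal{K}}$ together with the fact that Fortet--Mourier convergence is equivalent to weak convergence (quoted from the text) yields weak convergence. The real work is the reverse implication: assuming $\mu_n\Rightarrow\mu$ and $m_1(\mu_n)\to m_1(\mu)$, show $\|\mu_n-\mu\|_{\mathcal{K}}\to 0$. Normalizing by a constant we may take $f(0)=0$ for $f\in\mathcal{K}$, which gives $|f(x)|\le x$. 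Fix a continuous cutoff $\chi_M$ with $\chi_M\equiv 1$ on $[0,M]$, $\chi_M\equiv 0$ on $[M+1,\infty)$, and linear on $[M,M+1]$, and split
\begin{equation}
\int f\, d(\mu_n-\mu) = \int f\chi_M\, d(\mu_n-\mu) + \int f(1-\chi_M)\, d(\mu_n-\mu).
\end{equation}
The tail is controlled uniformly in $f$ by $\int_M^\infty x\, d\mu_n+\int_M^\infty x\, d\mu$, which is small for large $M$ by the uniform integrability produced in Claim~1.

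The heart of the argument is the uniform smallness of the bounded part over all $f\in\mathcal{K}$ with $f(0)=0$. Here the family $\{f\chi_M: f\in\mathcal{K},\ f(0)=0\}$ is equicontinuous (uniformly Lipschitz with constant at most $M+2$) and uniformly bounded on $[0,M+1]$, hence relatively compact in $C([0,M+1])$ by Arzel\`a--Ascoli. If $\sup_f|\int f\chi_M\, d(\mu_n-\mu)|$ failed to go to zero, a subsequence would supply functions $g_k$ with $|\int g_k\, d(\mu_{n_k}-\mu)|>\delta$, and passing to a uniformly convergent sub-subsequence $g_k\to g^*$ we would get $\int g_k\, d\mu_{n_k}\to\int g^*\, d\mu$ (and similarly for $\mu$) by weak convergence plus uniform closeness of $g_k$ to $g^*$, a contradiction. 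Combining this uniform control of the bounded part with the uniform tail bound delivers $\|\mu_n-\mu\|_{\mathcal{K}}\to 0$. The main obstacle is exactly this last uniformity in $f$, which the Arzel\`a--Ascoli compactness argument resolves; the uniform integrability from the first claim is what makes the tail cut feasible.
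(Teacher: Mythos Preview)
Your proof is correct, but it takes a quite different route from the paper. The paper's argument is essentially two lines: by the Skorokhod representation theorem there are random variables $X_n\to X$ almost surely with laws $\mu_n,\mu$, and then the first claim is just the standard equivalence between uniform integrability and $L^1$-convergence for a.s.\ convergent sequences (cited from Meyer). For the hard direction of the second claim, the paper observes that $X_n\to X$ in $L^1$, so for any $f\in\mathcal K$ one has $|\E f(X_n)-\E f(X)|\le \E|X_n-X|\to 0$, and the supremum over $f$ is automatic because the bound $\E|X_n-X|$ does not depend on $f$.

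Your approach avoids Skorokhod entirely and works directly with the measures: truncation plus the identity $x-(x\wedge M)=(x-M)_+$ handles the first claim, and for the second you cut off with $\chi_M$, control the tail by the uniform integrability just established, and then use Arzel\`a--Ascoli compactness of $\{f\chi_M:f\in\mathcal K,\ f(0)=0\}$ to push the uniformity in $f$ through a subsequence contradiction. This is more laborious but fully self-contained, requiring neither the Skorokhod theorem nor the cited $L^1$ criterion; the paper's proof is shorter and cleaner but leans on those external results. The Skorokhod trick is worth knowing precisely because it turns the troublesome ``$\sup_f$'' into a single $L^1$ estimate.
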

\begin{proof}
By Skorokhod theorem (25.6 of  \cite{bil}) there is a probability space $(\Omega,F,P)$ and nonnegative random variables $X_n$, $X$ with laws $\mu_n$ and $\mu$ respectively such that $X_n(\omega)\to X(\omega)$ for each $\omega \in \Omega$. Uniform integrability of $\mu_n$ is equivalent to uniform integrability of $X_n$. By Theorem II T21 of \cite{Meyer} uniform integrability of $X_n$ is equivalent to the convergence $m_1(\mu_n)\to m_1(\mu)$. To prove the last statement of Theorem notice that when $\|\mu_n-\mu\|_{\mathcal{K}}\to 0$
we have also $\|\mu_n-\mu\|_{\mathcal{F}}\to 0$, so that $\mu_n\Rightarrow \mu$ and $m_1(\mu_n)\to m_1(\mu)$. Assume now that  $\mu_n\Rightarrow \mu$, as $n\to \infty$ and $m_1(\mu_n)\to m_1(\mu)$. Then $X_n$ defined above converges to $X$ in $L^1(P)$ norm. In particular for any function $f$ with Lipschitz constant not greater than $1$ we have
\begin{equation}
|\mu_n(f)-\mu(f)|=|E\left[f(X_n)\right]-E\left[f(X)\right]|\leq E\left[|f(X_n)-f(X)|\right]\leq E|X_n-X|\to 0
\end{equation}
as $n\to \infty$, which means that we have also convergence in $\|\cdot\|_{\mathcal{K}}$ norm, which completes the proof.
\end{proof}
\begin{rem} The result above in not unexpected. For given measure $\mu\in D$ define $\bar{\mu}(A):=\int\limits_A x\mu(dx)$ for Borel measurable set $A$. Then compactness of the closure of the sequence $\left\{\bar{\mu}_n\in D\right\}$ is by Theorem 6.2 of \cite{1 billingsley} equivalent to the tightness of measures $\left\{\bar{\mu}_n\right\}$, which is equivalent to \eqref{unint1}.
\end{rem}
\begin{coll}\label{impcor}
Whenever $D\ni \mu_n\Rightarrow \mu$ and $\sup_n m_\beta(\mu_n)<\infty$ for some $\beta>1$ we have \linebreak $\|\mu_n-\mu\|_{\mathcal{K}}\to 0$ as $n\to \infty$.
\end{coll}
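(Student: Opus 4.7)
The plan is to reduce the claim to the second part of Theorem \ref{unif}, which asserts that for probability measures on $\mathbb{R}_+$ with finite first moment, weak convergence together with convergence of the first moments is equivalent to convergence in the Kantorovich--Wasserstein metric. Since by hypothesis $\mu_n \in D$ we already have $m_1(\mu_n) = 1$ for every $n$, so the missing ingredient is to verify that the limit measure $\mu$ also satisfies $m_1(\mu) = 1$ and that $m_1(\mu_n) \to m_1(\mu)$.

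The first step is to establish uniform integrability of $\{\mu_n\}$ in the sense of Definition \ref{uninte}. This is where the assumption $\sup_n m_\beta(\mu_n) < \infty$ with $\beta>1$ comes in. For any $M>0$ and any $n$,
\begin{equation}
\int_M^\infty x\, \mu_n(dx) \;\leq\; \int_M^\infty \frac{x^\beta}{M^{\beta-1}}\, \mu_n(dx) \;\leq\; \frac{m_\beta(\mu_n)}{M^{\beta-1}} \;\leq\; \frac{C}{M^{\beta-1}},
\end{equation}
where $C:=\sup_n m_\beta(\mu_n)<\infty$. Since $\beta-1>0$, the right-hand side tends to $0$ as $M\to\infty$, uniformly in $n$, which is precisely \eqref{unint1}.

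The second step is to invoke the first part of Theorem \ref{unif}: uniform integrability together with the assumed weak convergence $\mu_n\Rightarrow\mu$ yields $m_1(\mu_n)\to m_1(\mu)$. In particular $m_1(\mu)=1$, so $\mu\in D$, and we are in the precise setting of the second part of Theorem \ref{unif}. Applying it directly gives $\|\mu_n-\mu\|_{\mathcal{K}}\to 0$, which is the desired conclusion.

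There is essentially no hard step here; the only thing to be careful about is choosing the right Markov-type estimate to extract uniform integrability from a uniform bound on a strictly higher moment. Once that is in place, Theorem \ref{unif} does all the remaining work and no additional machinery is required.
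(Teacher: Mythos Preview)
Your proof is correct and follows essentially the same approach as the paper: establish uniform integrability of $\{\mu_n\}$ from the uniform $\beta$-moment bound, then invoke Theorem~\ref{unif}. The only difference is in the inequality used for uniform integrability: the paper applies H\"older's inequality together with the Markov bound $\mu_n([M,\infty))\le 1/M$, whereas you use the more direct pointwise estimate $x\le x^\beta/M^{\beta-1}$ on $[M,\infty)$, which is slightly simpler and even yields a faster decay rate in $M$.
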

\begin{proof}
It is clear that
\begin{equation}
\sup_n \int\limits_M\limits^\infty x \mu_n(dx)\leq \sup_n \Big(\int\limits_0\limits^\infty x^\beta \mu_n(dx)\Big)^{1\over \beta} \Big(\int\limits_0\limits^\infty 1_{x\geq M}\mu_n(dx)\Big)^{\beta-1\over \beta}.
\end{equation}
Now $\int\limits_0\limits^\infty 1_{x\geq M}\mu_n(dx)\leq {1\over M}$, so that $\mu_n$ is uniformly integrable and it remains to use Theorem \ref{unif}.\linebreak
\end{proof}
Before we formulate next theorem we define metric $d_r$ in the space of probability measures defined on $\mathbb{R}_+$ with finite $r$-th  moments, where $r\in [1,2)$. Namely for probability measures  $\mu$ and $\nu$ such that $m_r(\mu)<\infty$ and $m_r(\nu)<\infty$ let
\begin{equation}
d_r(\mu,\nu):=\inf\left\{\left(E(|X-Y|^r)\right)^{1\over r}\right\},
\end{equation}
where infimum taken over probability measures $P$ on $\mathbb{R}_+^2$ such that their marginals are $\mu$ and $\nu$ respectively. We have

\begin{thm}\label{Rio} For $\mu,\nu\in D$ such that $m_r(\mu)<\infty$ and $m_r(\nu)<\infty$ with $r\in (1,2)$ we have
\begin{equation}\label{Rioin}
\|\mu-\nu\|_{\mathcal{K}}\leq 2(2\|\mu-\nu\|_r)^{1\over r}.
\end{equation}
\end{thm}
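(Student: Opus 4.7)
The plan is to dualize $\|\cdot\|_{\mathcal{K}}$ and bridge to $\|\cdot\|_r$ via a regularization argument. Starting from an arbitrary $f\in\mathcal{K}$, I would produce a smooth approximant $f_\varepsilon$ whose derivative is $(r-1)$-H\"older continuous, so that $f_\varepsilon$ (after rescaling) lies in $\mathcal{F}_r$. Then $\mu(f_\varepsilon)-\nu(f_\varepsilon)$ can be estimated by $\|\mu-\nu\|_r$, while the approximation error $\mu(f-f_\varepsilon)+\nu(f_\varepsilon-f)$ is controlled via the uniform bound $\|f-f_\varepsilon\|_\infty$.

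Concretely, for $\varepsilon>0$ set
\begin{equation*}
f_\varepsilon(x):=\frac{1}{\varepsilon}\int_x^{x+\varepsilon} f(t)\,dt,
\end{equation*}
a one-sided forward average that is well defined for $x\in\mathbb{R}_+$ without any extension of $f$. The $1$-Lipschitz property of $f$ yields (a) $\|f-f_\varepsilon\|_\infty\leq\varepsilon/2$; (b) $f_\varepsilon$ is of class $C^1$ with $f_\varepsilon'(x)=[f(x+\varepsilon)-f(x)]/\varepsilon$, hence $|f_\varepsilon'|\leq 1$; and (c) $|f_\varepsilon'(x)-f_\varepsilon'(y)|\leq 2|x-y|/\varepsilon$ by a direct telescoping estimate. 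The elementary interpolation $\min(1,t)\leq t^{r-1}$, valid for $t\geq 0$ and $r\in(1,2)$, combines (b) and (c) into
\begin{equation*}
|f_\varepsilon'(x)-f_\varepsilon'(y)|\leq 2\bigl(|x-y|/\varepsilon\bigr)^{r-1}=2\varepsilon^{-(r-1)}|x-y|^{r-1},
\end{equation*}
so that $f_\varepsilon/(2\varepsilon^{-(r-1)})\in\mathcal{F}_r$ and consequently $|\mu(f_\varepsilon)-\nu(f_\varepsilon)|\leq 2\varepsilon^{-(r-1)}\|\mu-\nu\|_r$.

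Splitting $\mu(f)-\nu(f)=\mu(f-f_\varepsilon)+(\mu(f_\varepsilon)-\nu(f_\varepsilon))+\nu(f_\varepsilon-f)$ and applying the estimates above yields
\begin{equation*}
|\mu(f)-\nu(f)|\leq\varepsilon+2\varepsilon^{-(r-1)}\|\mu-\nu\|_r.
\end{equation*}
Taking the supremum over $f\in\mathcal{K}$ and minimizing in $\varepsilon$ at $\varepsilon=[2(r-1)\|\mu-\nu\|_r]^{1/r}$ produces
\begin{equation*}
\|\mu-\nu\|_{\mathcal{K}}\leq r(r-1)^{-(r-1)/r}\cdot 2^{1/r}\,\|\mu-\nu\|_r^{1/r}.
\end{equation*}
The main technical step, and the one giving precisely the constant in the statement, is to verify $h(r):=r(r-1)^{-(r-1)/r}\leq 2$ on $(1,2]$. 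Differentiating yields $\frac{d}{dr}\log h(r)=-r^{-2}\log(r-1)>0$ on $(1,2)$, while $h(2)=2$, so $h$ is strictly increasing on $(1,2]$ with $h(r)\leq 2$ throughout, which delivers the claimed bound $\|\mu-\nu\|_{\mathcal{K}}\leq 2(2\|\mu-\nu\|_r)^{1/r}$.
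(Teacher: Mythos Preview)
Your argument is correct and self-contained, but it is a genuinely different route from the paper's. The paper does not prove the inequality directly: it invokes Rio's result \cite{Rio} that the minimal $L^r$ distance $d_r(\mu,\nu)$ is bounded by $2(2\|\mu-\nu\|_r)^{1/r}$, then uses the trivial monotonicity $d_1\leq d_r$ together with the Kantorovich--Rubinstein duality $d_1=\|\cdot\|_{\mathcal K}$ (Dudley). Your proof bypasses the Wasserstein-$r$ distance entirely, working on the dual side by mollifying a generic $f\in\mathcal K$ into a function whose derivative is $(r-1)$-H\"older, and then optimizing the mollification scale. The advantage of your approach is that it is elementary and requires no external citations; the advantage of the paper's is brevity, since the heavy lifting is outsourced to Rio. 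It is worth noting that your calculus argument for $h(r)=r(r-1)^{-(r-1)/r}\leq 2$ on $(1,2]$ actually gives a strict inequality on $(1,2)$, so for the range $r\in(1,2)$ stated in the theorem your constant is slightly better than $2\cdot 2^{1/r}$, though this sharpening is not needed anywhere in the paper.
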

\begin{proof}
By \cite{Rio} we have that $d_r(\mu,\nu)\leq 2(2\|\mu-\nu\|_r)^{1\over r}$. Clearly $d_1(\mu,\nu)\leq d_r(\mu,\nu)$. Since by Theorem 20.1 of \cite{Dudley} $d_1(\mu,\nu)=\|\mu-\nu\|_{\mathcal{K}}$ we obtain \eqref{Rioin}.
\end{proof}

We recall now Kantorovich-Rubinstein maximum principle for our metric $\|\cdot\|_{\mathcal{K}}$, see Corollary 6.2 of \cite{2 rachev}
\begin{thm}\label{A2}
For probability measures $\mu,\nu$ defined on $\mathbb{R}_+$ there exists $f_0\in {\mathcal{K}}$ such that
\begin{equation}\label{equality}
\|\mu-\nu\|_{\mathcal{K}}=\langle f_0,\mu-\nu\rangle.
\end{equation}
Moreover when $f_0\in {\mathcal{K}}$ satisfies \eqref{equality} for measures $\mu\neq \nu$ defined on $\mathbb{R}_+$ then there are two different points $x_1,x_2\in \mathbb{R}_+$ such that $|f_0(x_1)-f_0(x_2)|=|x_1-x_2|$.
\end{thm}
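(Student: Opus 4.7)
The plan is to prove the two assertions separately, first establishing existence of the maximizer by a compactness argument, then identifying a pair of extremal points through Kantorovich-Rubinstein duality.

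For the existence of $f_0$, I would take a maximizing sequence $f_n \in \mathcal{K}$ with $\langle f_n, \mu-\nu\rangle \to \|\mu-\nu\|_{\mathcal{K}}$. Because $\mu$ and $\nu$ are both probability measures, adding a constant leaves $\langle f_n, \mu-\nu\rangle$ unchanged, so I may replace $f_n$ by $f_n - f_n(0)$ and assume $f_n(0)=0$. The $1$-Lipschitz property then gives $|f_n(x)|\leq x$ for all $x\in\mathbb{R}_+$, so $\{f_n\}$ is pointwise bounded on every compact and equicontinuous. Arzel\`a-Ascoli with a diagonal argument over $[0,k]$, $k=1,2,\ldots$, produces a subsequence converging locally uniformly to some $f_0:\mathbb{R}_+\to\mathbb{R}$; the pointwise limit inherits $|f_0(x)-f_0(y)|\leq|x-y|$, so $f_0\in\mathcal{K}$. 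Since $|f_n(x)|\leq x$ and the bound $x$ is integrable against $\mu+\nu$ (without finite first moments $\|\mu-\nu\|_{\mathcal{K}}$ would be infinite and the statement would be vacuous), dominated convergence yields $\langle f_0,\mu-\nu\rangle = \lim_n \langle f_n,\mu-\nu\rangle = \|\mu-\nu\|_{\mathcal{K}}$.

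For the second assertion, the cleanest route is through Kantorovich-Rubinstein duality, which in this setting states
\begin{equation*}
\|\mu-\nu\|_{\mathcal{K}} = \inf_{\pi} \int_{\mathbb{R}_+^2} |x-y|\,d\pi(x,y),
\end{equation*}
where the infimum runs over couplings $\pi$ of $\mu$ and $\nu$. The set of such couplings is tight (since the marginals are fixed) and the functional $\pi\mapsto\int|x-y|d\pi$ is lower semicontinuous, yielding an optimal coupling $\pi^*$. Weak duality says that for any $f\in\mathcal{K}$ and any admissible $\pi$,
\begin{equation*}
\langle f,\mu-\nu\rangle = \int (f(x)-f(y))\,d\pi(x,y) \leq \int |x-y|\,d\pi(x,y).
\end{equation*}
Applying this to $f_0$ and $\pi^*$ and using that both outer quantities equal $\|\mu-\nu\|_{\mathcal{K}}$ forces equality throughout, so $f_0(x)-f_0(y) = |x-y|$ for $\pi^*$-almost every $(x,y)$. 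Continuity of $f_0$ and of $(x,y)\mapsto |x-y|$ promotes this identity to every point of the closed support of $\pi^*$. The hypothesis $\mu\neq\nu$ rules out $\pi^*$ being concentrated on the diagonal (otherwise the marginals would coincide), so the support of $\pi^*$ contains some $(x_1,x_2)$ with $x_1\neq x_2$. For this pair, $|f_0(x_1)-f_0(x_2)|=|x_1-x_2|$, which is exactly the claim.

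The main obstacle is the rigorous invocation of the duality and, in particular, existence of an optimal $\pi^*$; this is standard optimal-transport theory on $\mathbb{R}_+$ with cost $c(x,y)=|x-y|$ and finite first moments, but it is what carries the argument. A purely direct route, bypassing couplings, would instead try to argue by contradiction: assume $|f_0(x_1)-f_0(x_2)|<|x_1-x_2|$ for all $x_1\neq x_2$ and then rescale $f_0$ by $1/L$ where $L=\sup_{x\neq y}|f_0(x)-f_0(y)|/|x-y|\leq 1$ to obtain a strictly larger admissible functional value. This works cleanly only if $L<1$; the delicate case $L=1$ without attainment would require a separate compactness or tightness argument to locate a converging pair $(x_n,y_n)$ with $x_n\neq y_n$ whose limit gives the desired extremal pair, and this is exactly what the duality argument above accomplishes in one stroke.
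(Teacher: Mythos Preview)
The paper does not actually prove this statement: it is recorded in the Appendix as a known result, with a bare citation to Corollary~6.2 of Rachev's monograph, and no argument is given. Your write-up therefore supplies what the paper deliberately omits, and both halves are correct. The Arzel\`a--Ascoli step for existence is clean once you normalize $f_n(0)=0$ and note that the envelope $|f_n(x)|\le x$ lies in $L^1(\mu+\nu)$; the second half via an optimal coupling $\pi^*$ and the complementary-slackness identity $f_0(x)-f_0(y)=|x-y|$ on $\operatorname{supp}\pi^*$ is the standard optimal-transport duality proof. The only caveat, which you already flag, is that the theorem as stated does not assume finite first moments; in the paper it is only ever invoked for measures in $D$ (where $m_1=1$), so the issue is moot in context.
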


Finally we recall now some known results related with ordinary differential equations in Banach spaces. For details see \cite{crandall}.

Let $( E, \|\cdot\|)$ be a Banach space and let $\tilde{D}$
 be a closed, convex, nonempty subset of $E$. In the space $E$ we
 consider {\it an evolutionary differential equation}\index{evolutionary differential equation}
\begin{equation}\label{s4.1a15}
    \frac{du}{dt}= - u + \tilde{P}\,u\qquad\text{for}\qquad t\in {\mathbb
    R}_{+}
\end{equation}
with the initial condition
\begin{equation}\label{s4.1a16}
    u(0)=u_{0}, \qquad u_{0}\in \tilde{D},
\end{equation}
where $\tilde{P} : \tilde{D} \to \tilde D$ is a given operator.

Function $u: {\mathbb R}_{+} \to E$ is called {\it a solution to the problem} (\ref{s4.1a15}), (\ref{s4.1a16}) if it is strongly
differentiable on ${\mathbb R}_{+}$, $u(t)\in \tilde{D}$ for all
$t\in {\mathbb R}_{+}$ and $u$ satisfies relations
(\ref{s4.1a15}), (\ref{s4.1a16}).

We have
\begin{thm}\label{sec4.twierdzenie1.3}
Assume that the operator  $\tilde{P}: \tilde{D}\to \tilde{D}$
satisfies Lipschitz condition
\begin{equation}\label{s4.1a17}
    \|\tilde{P}\,v - \tilde{P}\,w\| \leq l\,\|v - w\|\qquad\textnormal{for}\qquad u, w\in
    \tilde{D},
\end{equation}
where  $l$ is a nonnegative constant. Then for every   $u_{0}\in
\tilde{D}$ there exists a unique solution  $u$ to the problem
(\ref{s4.1a15}), (\ref{s4.1a16}).
\end{thm}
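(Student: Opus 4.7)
The plan is to rewrite the problem \eqref{s4.1a15}, \eqref{s4.1a16} as an equivalent integral (mild) equation and then to invoke the Banach contraction principle in a weighted sup--norm on bounded time intervals. Multiplying \eqref{s4.1a15} by the integrating factor $e^{t}$ and integrating from $0$ to $t$ formally yields
\begin{equation}\label{intform}
u(t)=e^{-t}u_{0}+\int\limits_{0}\limits^{t}e^{-(t-s)}\tilde P\,u(s)\,ds,\qquad t\in\mathbb R_{+}.
\end{equation}
Any strongly differentiable $\tilde D$-valued function satisfying \eqref{s4.1a15}, \eqref{s4.1a16} satisfies \eqref{intform}, and conversely any continuous solution of \eqref{intform} is automatically strongly differentiable (the right-hand side is, since $s\mapsto\tilde P u(s)$ is continuous, a $C^{1}$ function of $t$) and satisfies \eqref{s4.1a15}, \eqref{s4.1a16}.

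Fix $T>0$ arbitrary and set $X_{T}:=C([0,T],\tilde D)$, equipped with the weighted norm $\|u\|_{\gamma}:=\sup_{0\le t\le T}e^{-\gamma t}\|u(t)\|$ for some $\gamma>l$. Let $\Phi$ be defined on $X_{T}$ by $(\Phi u)(t)=e^{-t}u_{0}+\int_{0}^{t}e^{-(t-s)}\tilde P u(s)\,ds$. The first key step is to check that $\Phi$ preserves $\tilde D$-valuedness: because $e^{-t}+\int_{0}^{t}e^{-(t-s)}ds=1$, the expression $(\Phi u)(t)$ is, for each fixed $t$, a ``continuous convex combination'' of $u_{0}\in\tilde D$ and the values $\tilde P u(s)\in\tilde D$. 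Since $\tilde D$ is closed and convex, a standard Riemann-sum approximation argument shows $(\Phi u)(t)\in\tilde D$. Continuity of $\Phi u$ in $t$ is immediate from continuity of $u$ and boundedness of $\tilde P u$ on $[0,T]$.

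The second step is the contraction estimate. For $u,v\in X_{T}$ we have, using \eqref{s4.1a17},
\begin{equation}
\|(\Phi u)(t)-(\Phi v)(t)\|\le l\int\limits_{0}\limits^{t}e^{-(t-s)}\|u(s)-v(s)\|\,ds\le l\,\|u-v\|_{\gamma}\int\limits_{0}\limits^{t}e^{-(t-s)}e^{\gamma s}\,ds,
\end{equation}
and after multiplying by $e^{-\gamma t}$ and bounding the integral by $e^{\gamma t}/(\gamma+1)$ we obtain $\|\Phi u-\Phi v\|_{\gamma}\le\frac{l}{\gamma+1}\|u-v\|_{\gamma}$. Choosing $\gamma>l-1$ makes $\Phi$ a strict contraction on the complete metric space $X_{T}$, so the Banach fixed point theorem gives a unique fixed point $u_{T}\in X_{T}$.

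Since $T>0$ was arbitrary and solutions on $[0,T_{1}]$ and $[0,T_{2}]$ coincide on the common interval by uniqueness, we can paste the fixed points together into a unique continuous $\tilde D$-valued function $u$ on $\mathbb R_{+}$ solving \eqref{intform}. By the differentiability remark made above, $u$ is strongly differentiable and satisfies \eqref{s4.1a15}, \eqref{s4.1a16}. I expect the only delicate point to be the verification that $(\Phi u)(t)\in\tilde D$: the integral is a Bochner integral and one must justify that it lies in the closed convex hull of the range of its integrand, which is standard but worth writing carefully.
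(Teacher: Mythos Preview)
Your proof is correct and follows essentially the same approach as the paper: reduce to the mild integral equation \eqref{s4.1a18}, use convexity and closedness of $\tilde D$ to see that the right-hand side stays in $\tilde D$, and then apply a Picard/Banach fixed-point argument. The only cosmetic difference is that the paper speaks of ``successive approximations'' while you use an equivalent weighted-norm contraction on $[0,T]$; your identification of the $\tilde D$-valuedness of the Bochner integral as the one point requiring care is exactly the observation the paper also singles out.
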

The standard proof of Theorem \ref{sec4.twierdzenie1.3} is
based on the fact, that function $u: \mathbb {R}_{+}\to
\tilde{D}$ is a solution to (\ref{s4.1a15}), (\ref{s4.1a16}) if and only if
it is continuous and satisfies the integral equation
\begin{equation}\label{s4.1a18}
u(t) = e^{-t}\,u_{0} +
\int\limits_{0}\limits^{t}\,e^{-(t-s)}\,\tilde{P}\,u(s)\,ds
\qquad\text{for}\qquad t\in {\mathbb
    R}_{+}.
\end{equation}
Due to completeness of $\tilde{D}$, the integral on the right hand
side is well defined and equation (\ref{s4.1a18}) may be solved by
the method of successive approximations.
Observe that, thanks to the properties of $\tilde{D}$, for every
$u_{0}\in \tilde{D}$ and for every continuous function $ u : {\mathbb
{R}}_{+} \to \tilde{D}$ the right hand side of (\ref{s4.1a18}) is
also a function with values in $\tilde{D}$.
The solutions of (\ref{s4.1a18}) generate a semigroup of operators
$(\tilde{P}^{\ t})_{t\geq 0}$ on $\tilde{D}$ given by the formula
\begin{equation}\label{s4.1a19}
    \tilde{P}^{\ t}\,u_{0} = u(t) \qquad\text{for}\qquad t\in {\mathbb
    R}_{+},\qquad u_{0}\in \tilde{D}.
\end{equation}

\addcontentsline{toc}{section}{References}

\end{document}